\documentclass[11pt,a4paper]{article}

\usepackage[utf8]{inputenc}     % Codificación de caracteres
\usepackage[T1]{fontenc}        % Codificación de salida
\usepackage[english]{babel}     % Idioma del documento
\usepackage{lineno,hyperref}
\usepackage[left=2cm,top=2.5cm,right=2cm,bottom=2.5cm]{geometry}
\usepackage{graphicx, tikz,array,delarray}
\usepackage{subfigure}
\usepackage{latexsym}
\usepackage{amscd}
\usepackage[utf8]{inputenc}
\usepackage{verbatim}
\usepackage{mathrsfs}
\usepackage{fancyhdr}
\usepackage{float}
\usepackage{palatino}
\usepackage{mathtools}
\usepackage{array}
\usepackage{hyperref}
\usepackage{authblk} % Para manejar autores y afiliaciones

\usepackage{amsmath,amssymb,amsthm}
\usepackage{mathtools}

\usepackage{geometry}
\usepackage[style=numeric,sorting=none,backend=biber]{biblatex}
\newtheorem{theorem}{\textbf{Theorem}}

\newtheorem{definition}[theorem]{\textbf{Definition}}

\newtheorem{proposition}[theorem]{\textbf{Proposition}}
\newtheorem{remark}[theorem]{Remark}

\title{A boundary integral approach to the eigenvalue problem for the anisotropic bidomain operator with perfect contact conditions}

\author[1]{Raul Felipe-Sosa\thanks{Corresponding author: \texttt{rfelipe@nebrija.es}}}
\author[2]{Yofre H. García-Gómez}

\affil[1]{Departamento de Matemáticas, Escuela Politénica Superior, Universidad Nebrija, 28015 Madrid, Spain}
\affil[2]{Faculty of Sciences in Physics and Mathematics, Universidad Autónoma de Chiapas (UNACH), Chiapas, México}

\begin{document}

\maketitle

\begin{abstract}
In this work, we study the eigenvalue problem associated with the bidomain operator in an anisotropic heterogeneous domain composed of three subregions representing the left ventricle, the septum, and the right ventricle. The anisotropic conductivity, together with the different orientations of the fiber directions in each subdomain, leads to an elliptic boundary value problem with discontinuous coefficients and transmission conditions across the interfaces.

Our main contribution consists in reformulating this problem using potential theory. By expressing the solution in terms of single- and double-layer potentials, we reduce the original boundary value problem to a system of Fredholm-type boundary integral equations. We derive explicit expressions for the fundamental solution of the associated anisotropic Helmholtz operator, as well as for the corresponding kernels, which are given in terms of Bessel functions.

Finally, we propose a numerical scheme for approximating the eigenvalues of the bidomain operator based on the discretization of the resulting integral system. This approach provides an efficient framework for the analysis of anisotropic boundary value problems with interface conditions.
\end{abstract}

\noindent\textbf{Keywords:} Bidomain operator, Anisotropic diffusion, Boundary integral equations, Cardiac electrophysiology

\section{Introduction}
The mechanical activity of the heart is driven by the propagation of bioelectrical impulses through the myocardium. At the microscopic level, this process is governed by the ability of cardiac cells to become electrically activated through the exchange of ions across their membranes. This phenomenon, which is essential for the synchronized contraction of the cardiac chambers, can be mathematically described by a system of nonlinear partial differential equations defined on a domain that exhibits anisotropic electrical conductivity, governing the evolution of the electrical potential in the myocardium.

One of the most widely recognized frameworks for modeling cardiac electrical activity is the \emph{bidomain model}, which represents cardiac tissue as a medium composed of two superimposed conductive domains, namely the intracellular and extracellular domains, coupled through the cell membrane. In its modern formulation, the model was first introduced by L.~Tung in his doctoral dissertation~\cite{Tung1978}. For a comprehensive analysis of this model, the reader is referred to~\cite{Sundnes2006}. The bidomain model consists of a system of nonlinear partial differential equations involving multiple types of equations---parabolic, elliptic, and ordinary---which significantly increase the complexity of both its theoretical analysis and numerical simulation.

From a theoretical perspective, mathematicians have investigated, on the one hand, the well-posedness of the associated boundary value problem, which involves the proper formulation of the problem and the proof of existence and uniqueness of solutions within this framework; see \cite{Colli-Franzone2002}, \cite{Veneroni2009}, and \cite{Bourgault2009}. On the other hand, several results have been established regarding the existence of periodic solutions when periodic activation of the endocardium is imposed as a boundary condition in the model, as discussed in \cite{Felipe-Sosa2022} and \cite{Felipe-Sosa2023}.

The numerical analysis of the bidomain model faces significant challenges, including the complex cardiac geometry and the anisotropy of electrical current propagation \cite{Sundnes2006}. Implementing a numerical algorithm requires the discretization of both the cardiac geometry and the anisotropy tensor, which is closely related to the physiological properties and structural organization of the heart. For instance, in \cite{Boulakia2010}, numerical solutions of the bidomain model were obtained using only the ventricular geometry, approximated by ellipsoids of revolution. Within this framework, the electrical conductivity tensor was aligned with the local fiber direction. Nevertheless, the results revealed several noteworthy features; for example, a portion of an ECG was simulated with a high degree of realism. Additional works on numerical simulations of cardiac electrical activity include \cite{Colli-Franzone2006}, \cite{Patelli2017}, \cite{Bucelli2021}, and \cite{Charawi2017}.

Despite the extensive body of work devoted to the numerical approximation of the bidomain model, most existing approaches rely on volumetric discretization techniques, such as finite element methods, which require the resolution of large-scale systems defined on complex geometries. These methods typically involve a fine discretization of the entire computational domain in order to accurately capture both the anisotropic properties of the tissue and the intricate cardiac geometry, leading to a significant computational burden.

In contrast, alternative formulations based on boundary integral methods remain largely unexplored in this context. Such approaches offer the potential advantage of reducing the dimensionality of the problem, as they reformulate the governing equations in terms of quantities defined on the interfaces between subdomains. This reduction is particularly appealing in the present setting, where the discontinuities in the conductivity tensors are localized at the interfaces between the ventricles and the septum.

An interesting problem that can be addressed using the bidomain model is to understand why the septum acts as a functional barrier to the wavefront. When both ventricles are activated, the wavefront originating in the left ventricle fails to propagate effectively through the septum into the right ventricle, and vice versa. This occurs despite the fact that the septum lacks anatomical features that distinguish it from the ventricles. This phenomenon was reported in \cite{Medrano2002} and \cite{Medrano1957}, where experimental observations showed that, in the presence of bundle branch block---that is, when one of the ventricles is not activated---the wavefront from the active ventricle can propagate into the other, although at a significantly lower velocity than expected. This suggests that, although current diffuses through the septal tissue, there may exist a discontinuity in the electrical conductivity tensors at the interface between the septum and the ventricles. To the best of our knowledge, this hypothesis has not been previously proposed or examined, either in the medical literature or in mathematical modeling, and constitutes one of the novel contributions of this work.

These considerations motivate the study of the bidomain model in a three-phase region, where each phase corresponds to the left ventricle, the septum, and the right ventricle, respectively. In this setting, we assume that the conductivity tensors exhibit a discontinuity at the left ventricle--septum and right ventricle--septum interfaces, the specific nature of which will be described later in this work.

In \cite{Felipe-Sosa2023}, it was shown that a weak $T$-periodic solution exists for the bidomain problem coupled with the torso, obtained as the limit of a Faedo--Galerkin sequence constructed from the eigenvectors of the bidomain operator. This highlights the importance of the eigenvectors, not only from a theoretical standpoint but also as a tool for numerical approximation. For this reason, the present study focuses on the computation of these eigenvectors within the context described above.

In summary, this work develops a methodology to solve the eigenvalue problem associated with the bidomain operator in an anisotropic region that idealizes the left ventricle, the septum, and the right ventricle of the heart. The approach is based on potential theory, representing the solutions as linear combinations of single- and double-layer potentials. In Chapter~1 of \cite{Linkov2002}, the fundamental aspects of this theory are presented in the context of planar linear elasticity. Here, we adapt this approach to the problem under consideration. In this way, the original problem is reduced to a Fredholm integral equation of the second kind posed on the interfaces, whose numerical solution yields the desired eigenvalues.

The article is organized as follows. Section~2 presents the mathematical formulation of the boundary value problem and describes the spatial region where it is defined. In addition, the electrical conductivity tensors are constructed according to the type of anisotropy under consideration. In Section~3, we apply potential theory to the anisotropic problem defined in the previous section and compute the corresponding fundamental solution. Section~4 reduces the boundary value problem to a system of singular integral equations, whose solution yields the single- and double-layer potentials involved in the solution of the boundary value problem under consideration. Finally, Section~5 presents the numerical solution of the system of integral equations, thereby obtaining a numerical solution of the boundary value problem.

\section{Boundary Value Problem Formulation and Conductivity Anisotropy}

In this work, we consider a two-dimensional spatial domain. Since our objective is to apply potential theory to the boundary value problem, and as a preliminary step, it is convenient to work in two dimensions. From a modeling perspective, this assumption does not appear to be restrictive. Specifically, the domain represents an infinitesimal cross-section of the myocardium centered at the interventricular septum, including tissue from the left and right ventricles, as well as the septum. This configuration allows us to capture variations in the longitudinal fiber directions, giving rise to jump discontinuities at the interfaces between the three regions.

This section is organized into four parts. In the first part, we formulate the eigenvalue boundary value problem in the region of interest, specifying the governing equations, the boundary conditions, and the transmission conditions that arise at the interfaces between the three subregions under consideration.

In the second part, we describe the spatial domain and the type of anisotropy it exhibits.

The third part introduces what we refer to as the direct formulation, in which we describe how the solutions of the equation can be expressed as a combination of single- and double-layer potentials. In this context, the densities correspond to the values of the electrical potentials and currents on the outer boundary and at the interfaces between the subdomains representing the left ventricle, the septum, and the right ventricle.

Finally, we derive the fundamental solution associated with the governing equation, noting that this solution defines the kernel of the single-layer potential, while its normal derivative defines the kernel of the double-layer potential.

\subsection{Boundary Value Problem Formulation}

We denote the domain by $\Omega = \Omega_I \cup \Omega_S \cup \Omega_D$, where $\Omega_I$, $\Omega_D$, and $\Omega_S$ represent the right ventricle, the left ventricle, and the septum, respectively. More precisely, these subdomains are defined as follows:
\begin{align*}
\Omega_I &= \left(0, \frac{a}{2} - h\right)\times (0, b),\\
\Omega_S &= \left(\frac{a}{2} - h, \frac{a}{2} + h\right)\times (0, b),\\
\Omega_D &= \left(\frac{a}{2} + h, a\right)\times (0, b).
\end{align*}
These subdomains are illustrated in Figure~\ref{figura1}.
% \begin{figure}[H]
% \centering
% \includegraphics[scale=0.5]{Figura1.jpg} 
% \caption{Geometric representation of the $\Omega$ region}
% \end{figure}

\begin{figure}[H]
\centering
\begin{tikzpicture}
    % Dibuja los ejes
    \draw[->] (-1,0) -- (7,0) node[right] {$x$}; % Eje X
    \draw[->] (0,-1) -- (0,4) node[above] {$y$}; % Eje Y

    % Dibuja el rectángulo
    \draw[thick] (0,0) rectangle (6,3);
    
    % Dibuja las líneas divisorias (en 3 partes iguales)
    \draw[thick] (2,0) -- (2,3); % Línea divisoria vertical 1
    \draw[thick] (4,0) -- (4,3); % Línea divisoria vertical 2

    % Etiquetas en el centro de cada subregión
    \node at (1, 1.5) {$\Omega_I$}; % Centro de la primera subregión
    \node at (3, 1.5) {$\Omega_S$}; % Centro de la segunda subregión
    \node at (5, 1.5) {$\Omega_D$}; % Centro de la tercera subregión

    % Etiquetas en las intersecciones del eje X
    \node at (2, -0.23) {$\frac{a}{2} - h$};   % Intersección en x = 2
    \node at (4, -0.23) {$\frac{a}{2} + h$};   % Intersección en x = 4
    \node at (6, -0.23) {$a$};   % Intersección en x = 6   

    % Etiquetas en las intersecciones del eje Y
    \node at (-0.2, 3) {$b$};   % Intersección en y = 3

    % Cambia el color del segmento c-d a rojo
    \draw[ thick] (4, 0) -- (6, 0); % Segmento rojo entre c y d en el eje X
    \draw[ thick] (2, 0) -- (4, 0); 
    \draw[ thick] (2, 3) -- (4, 3); 
    \draw[ thick] (2,0) -- (2,3); % Línea divisoria vertical 1
    \draw[ thick] (4,0) -- (4,3); % Línea divisoria vertical 2
    \draw[ thick] (0, 0) -- (2, 0); 
    \draw[thick] (0,0) -- (0,3); % Línea vertical sobre eje X
    \draw[thick] (0,3) -- (2,3); % Línea divisoria vertical 2
    \draw[thick] (6,0) -- (6,3); % Línea divisoria vertical 2
    \draw[thick] (4,3) -- (6,3); % Línea divisoria vertical 1
       
\end{tikzpicture}
\caption{Geometric representation of the $\Omega$ region}
\label{figura1}
\end{figure}
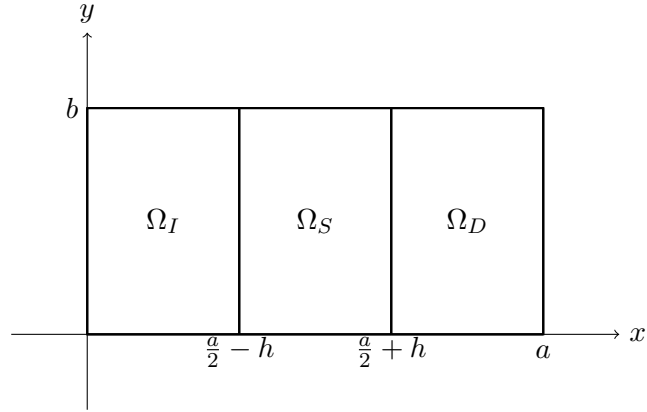

As we will see later, it is convenient to introduce the following notation:
\begin{itemize}
\item We define $\partial \Omega_I = \Gamma_I \cup \Gamma_{IS}$, where
\begin{align*}
\Gamma_I &= \left\{(x_1, b): x_1 \in \left[0, \tfrac{a}{2} - h\right]\right\} 
\cup \left\{(0, x_2): x_2 \in [0, b]\right\} 
\cup \left\{(x_1, 0): x_1 \in \left[0, \tfrac{a}{2} - h\right]\right\},
\end{align*}
is the outer boundary of $\Omega_I$, and
\begin{align*}
\Gamma_{IS} &= \left\{\left(\tfrac{a}{2} - h, x_2\right): x_2 \in [0, b]\right\},
\end{align*}
is the interface between $\Omega_I$ and $\Omega_S$.

\item We define $\partial \Omega_S = \Gamma_{IS} \cup \Gamma_S \cup \Gamma_{SD}$, where
\begin{align*}
\Gamma_S &= \left\{(x_1, b): x_1 \in \left[\tfrac{a}{2} - h, \tfrac{a}{2} + h\right]\right\}
\cup \left\{(x_1, 0): x_1 \in \left[\tfrac{a}{2} - h, \tfrac{a}{2} + h\right]\right\},
\end{align*}
is the outer boundary of $\Omega_S$, and
\begin{align*}
\Gamma_{SD} &= \left\{\left(\tfrac{a}{2} + h, x_2\right): x_2 \in [0, b]\right\},
\end{align*}
is the interface between $\Omega_S$ and $\Omega_D$.

\item Finally, we define $\partial \Omega_D = \Gamma_D \cup \Gamma_{SD}$, where
\begin{align*}
\Gamma_D &= \left\{(x_1, b): x_1 \in \left[\tfrac{a}{2} + h, a\right]\right\}
\cup \left\{(a, x_2): x_2 \in [0, b]\right\}
\cup \left\{(x_1, 0): x_1 \in \left[\tfrac{a}{2} + h, a\right]\right\},
\end{align*}
is the outer boundary of $\Omega_D$.

\item We also denote the external boundary of the domain by
\[
\partial_{\mathrm{ext}}\Omega = \Gamma_I \cup \Gamma_S \cup \Gamma_D.
\]
\end{itemize}

The eigenvalue problem associated with the bidomain operator consists in solving the following linear system in $\Omega$:
\begin{align}\label{Eqn1}
\left\{
\begin{array}{l}
\nabla \cdot\left(\sigma_i(\mathbf{x}) \nabla u_i\right) + \lambda^2 u_i = 0, \\
\nabla \cdot\left(\sigma_e(\mathbf{x}) \nabla u_e\right) + \lambda^2 u_e = 0,
\end{array}
\right. \qquad \mathbf{x} \in \Omega,
\end{align}
where $\mathbf{x} = (x_1, x_2)$ is a vector in $\mathbb{R}^2$, the operators $\nabla$ and $\nabla \cdot$ denote the gradient and divergence, respectively, $u_i$ and $u_e$ are the intra- and extracellular potentials, and $\sigma_i$, $\sigma_e$ are the corresponding intra- and extracellular conductivity tensors. 

Originally, the bidomain model can be formulated as a system of parabolic equations coupled with a system of ordinary differential equations that describe cellular activation. The nonlinear terms in the system are responsible for coupling the equations governing the intracellular and extracellular currents. However, as can be seen in \eqref{Eqn1}, if only the diffusion operator is considered, these equations decouple. Therefore, our methodology can be applied to each of these equations separately. Moreover, assuming that the heart is uncoupled from the torso, both the intracellular and extracellular potentials satisfy the same boundary and transmission conditions. Accordingly, we consider a partial differential equation of the following form:

\begin{align}\ref{Eqn2}
\nabla \cdot\left(\sigma(\mathbf{x}) \nabla u\right) + \lambda^2 u = 0,  
\end{align}
where the subscripts $i$ and $e$ have been omitted. In what follows, no distinction will be made, as both the boundary value problem formulation and the proposed solution methodology apply equally to the intracellular and extracellular currents. 

We also impose Neumann-type boundary conditions on the external boundary of $\Omega$, together with contact conditions at the interfaces between the subdomains composing the region. From a physiological point of view, this means that both the intracellular and extracellular currents are confined within the boundary of $\Omega$, and that both the potentials and the current fluxes remain continuous across the interfaces between subregions. Note that imposing a zero normal flux condition for the intracellular current implies that the heart is electrically isolated from the torso.

Before specifying the boundary and perfect contact conditions, we introduce the following notation:
\begin{align}\label{p_ie}
p(\mathbf{x}) = \nabla\left(\sigma(\mathbf{x}) \nabla u(\mathbf{x})\right) \cdot \mathbf{n}(\mathbf{x}), \quad \text{defined on } \partial \Omega \text{ and } \Gamma_{IS}, \Gamma_{SD},
\end{align}
which represents the intracellular and extracellular current fluxes across the corresponding boundaries. Here, $\mathbf{n}(\mathbf{x})$ denotes the outward unit normal vector at the boundary point $\mathbf{x}$.

The boundary conditions corresponding to zero normal current flux are given by:
\begin{align}\label{BC1}
p(\mathbf{x}) = 0, \quad \text{on } \partial_{\text{ext}} \Omega.
\end{align}

We also impose so-called perfect transmission conditions:
\begin{align}\label{PCC1}
\begin{cases}
p^{(I)}(\mathbf{x}) = p^{(S)}(\mathbf{x}), \\
u^{(I)}(\mathbf{x}) = u^{(S)}(\mathbf{x}),
\end{cases}
\quad \text{for } \mathbf{x} \in \Gamma_{IS},
\end{align}
\begin{align}\label{PCC2}
\begin{cases}
p^{(S)}(\mathbf{x}) = p^{(D)}(\mathbf{x}), \\
u^{(S)}(\mathbf{x}) = u^{(D)}(\mathbf{x}),
\end{cases}
\quad \text{for } \mathbf{x} \in \Gamma_{SD}.
\end{align}

Here, the superscripts denote traces taken from the corresponding subdomains. More precisely, for $\mathbf{x}\in\Gamma_{IS},$
\begin{align*}
\begin{cases}
p^{(I)}(\mathbf{x}) = \displaystyle\lim_{\xi \to \mathbf{x},\, \xi \in \Omega_I} p(\xi), \\
p^{(S)}(\mathbf{x}) = \displaystyle\lim_{\xi \to \mathbf{x},\, \xi \in \Omega_S} p(\xi), \\
u^{(I)}(\mathbf{x}) = \displaystyle\lim_{\xi \to \mathbf{x},\, \xi \in \Omega_I} u(\xi), \\
u^{(S)}(\mathbf{x}) = \displaystyle\lim_{\xi \to \mathbf{x},\, \xi \in \Omega_S} u(\xi),
\end{cases}
\end{align*}
and, for $\mathbf{x}\in\Gamma_{SD},$
\begin{align*}
\begin{cases}
p^{(S)}(\mathbf{x}) = \displaystyle\lim_{\xi \to \mathbf{x},\, \xi \in \Omega_S} p(\xi), \\
p^{(D)}(\mathbf{x}) = \displaystyle\lim_{\xi \to \mathbf{x},\, \xi \in \Omega_D} p(\xi), \\
u^{(S)}(\mathbf{x}) = \displaystyle\lim_{\xi \to \mathbf{x},\, \xi \in \Omega_S} u(\xi), \\
u^{(D)}(\mathbf{x}) = \displaystyle\lim_{\xi \to \mathbf{x},\, \xi \in \Omega_D} u(\xi).
\end{cases}
\end{align*}

Once the perfect transmission conditions have been established, it is worth emphasizing that they imply that, despite the discontinuities in the electrical conductivity tensors at the interfaces, the current remains continuous across them. However, from both a physiological and a mathematical perspective, it may be of interest to consider situations in which the current also exhibits jump discontinuities across these interfaces, leading to what we refer to as imperfect transmission conditions. Incorporating such conditions would introduce an additional level of complexity to the boundary value problem under consideration and would further motivate the use of potential theory for its resolution. This problem will be addressed in future work. 

\subsection{Electrical Conductivity Tensors and Anisotropic Structure}

Before proceeding with the solution of the eigenvalue problem, we define the type of anisotropy under consideration.

It is well known that electrical current in the heart exhibits preferred directions along which it propagates more rapidly. This is due to the fact that cardiomyocytes (cardiac cells) are elongated and arranged in an organized manner, which at the macroscopic level gives rise to a fiber-like structure. See \cite{Kotadia2020} for further details on this topic.

This anisotropy determines the structure of the tensor $\sigma$. Following \cite{Colli-Franzone2006}, we assume that at each point $\mathbf{x} \in \Omega$ there exist three orthonormal directions $\mathbf{a}_l(\mathbf{x}), \mathbf{a}_t(\mathbf{x}), \mathbf{a}_n(\mathbf{x})$, corresponding to the longitudinal, transverse, and normal directions to the fiber structure. Moreover, we assume that the longitudinal and transverse directions lie in the same plane. To construct the conductivity tensors, it is therefore necessary to specify how these directions are distributed throughout $\Omega$. In what follows, we assume that these directions are homogeneous within each phase; that is, the directions $\mathbf{a}_l(\mathbf{x})$ and $\mathbf{a}_t(\mathbf{x})$ are constant within each subregion of $\Omega$.

In this case, we assume that the longitudinal direction of the fibers $\mathbf{a}_l(x,y)$ is constant within each subregion of $\Omega$ and is defined as follows:
\begin{align*}
\mathbf{a}_l(x,y) =
\begin{cases}
(1, 0), & \text{if } (x,y) \in \Omega_I,\\
(0, 1), & \text{if } (x,y) \in \Omega_S,\\
(-1, 0), & \text{if } (x,y) \in \Omega_D.
\end{cases}
\end{align*}

Consequently, the transverse direction $\mathbf{a}_t(x,y)$ is orthogonal to $\mathbf{a}_l(x,y)$ and given by:
\begin{align*}
\mathbf{a}_t(x,y) =
\begin{cases}
(0, 1), & \text{if } (x,y) \in \Omega_I,\\
(1, 0), & \text{if } (x,y) \in \Omega_S,\\
(0, 1), & \text{if } (x,y) \in \Omega_D.
\end{cases}
\end{align*}

\begin{figure}[H]
\centering
\includegraphics[width=0.6\linewidth]{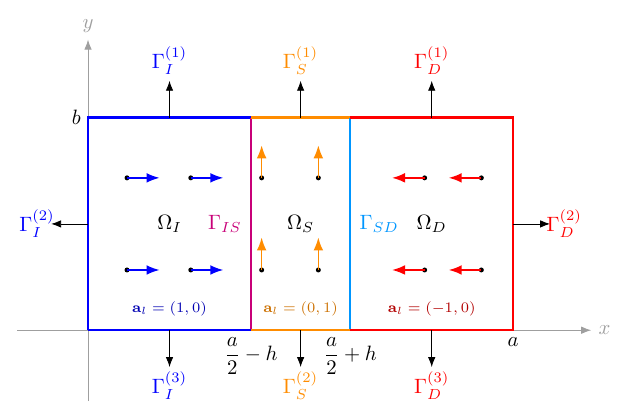}
\caption{Geometric representation of the longitudinal fiber direction}
%\label{fig:enter-label}
\end{figure}
Once the longitudinal and transverse fiber directions have been defined in each subregion of $\Omega$, the conductivity tensors can be constructed using the following formula:
\begin{align*}
\sigma(x_1,x_2) = \sigma_{l}\,\mathbf{a}_l(x_1,x_2)^{T}\mathbf{a}_l(x_1,x_2) + \sigma_{t}\,\mathbf{a}_t(x_1,x_2)^{T}\mathbf{a}_t(x_1,x_2),
\end{align*}
which yields
\begin{align}
\sigma(x_1,x_2) = 
\begin{cases}
\begin{pmatrix}
\sigma_l & 0\\
0 & \sigma_t
\end{pmatrix}, & \text{if } (x_1,x_2) \in \Omega_I, \\
\\
\begin{pmatrix}
\sigma_t & 0\\
0 & \sigma_l
\end{pmatrix}, & \text{if } (x_1,x_2) \in \Omega_S, \\
\\
\begin{pmatrix}
\sigma_l & 0\\
0 & \sigma_t
\end{pmatrix}, & \text{if } (x_1,x_2) \in \Omega_D.
\end{cases}
\end{align}

Here, $\sigma_l$ and $\sigma_t$ denote the local conductivities in the longitudinal and transverse directions with respect to the fiber orientation, respectively. In this study, we assume that $\sigma_l > \sigma_t$. It is important to note that the tensors $\sigma(x_1, x_2)$ are piecewise constant in $\Omega$. Consequently, when formulating the boundary value problem, it is necessary to impose appropriate contact conditions at the interfaces between the different subregions of $\Omega$. In this way, we aim to capture the experimentally observed behavior, discussed in the Introduction, whereby the septum acts as a functional barrier to the propagation of the wavefront.

\subsection{Direct formulation}\label{DirForSec}
In this section, we assume that both the electrical potential and the conductivity tensor are defined piecewise:
\[
u(\mathbf{x}) =
\begin{cases}
u^{(I)}(\mathbf{x}), & \text{if } \mathbf{x} \in \Omega_I,\\[0.1cm]
u^{(S)}(\mathbf{x}), & \text{if } \mathbf{x} \in \Omega_S,\\[0.1cm]
u^{(D)}(\mathbf{x}), & \text{if } \mathbf{x} \in \Omega_D,
\end{cases}
\]
and
\[
\sigma(\mathbf{x}) =
\begin{cases}
\sigma^{(I)}(\mathbf{x}), & \text{if } \mathbf{x} \in \Omega_I,\\[0.1cm]
\sigma^{(S)}(\mathbf{x}), & \text{if } \mathbf{x} \in \Omega_S,\\[0.1cm]
\sigma^{(D)}(\mathbf{x}), & \text{if } \mathbf{x} \in \Omega_D.
\end{cases}
\]

To develop the direct formulation of the problem in terms of single- and double-layer potentials, we focus on the dynamics within each region $\Omega_j$, with $j = I, S, D$. To fix ideas, we begin by deriving the direct formulation for the electrical potential in the region $\Omega_I$, and subsequently obtain the corresponding expressions in the other regions, which follow from a similar procedure. Throughout the following discussion, we adopt the notation $\mathbf{x} = (x_1, x_2)$ and $\mathbf{y} = (y_1, y_2)$ to denote points in $\mathbb{R}^2$.

We denote by $u^{(I)}(\mathbf{x})$ the electrical potential in $\Omega_I$, and by $u^{(*, I)}(\mathbf{x}, \mathbf{y}; \lambda)$ the fundamental solution associated with the anisotropic Helmholtz operator, which will be defined and explicitly derived in the next section. We emphasize that the fundamental solution depends on the eigenvalue $\lambda$.

Next, we multiply \eqref{Eqn1} by $u^{*(I)}(\mathbf{x}, \mathbf{y}; \lambda)$ and integrate over $\Omega_I$:

\begin{align}\label{formInt1}
\int_{\Omega_I} \left( \nabla \cdot \left( \sigma^{(I)}(\mathbf{y}) \nabla u^{(I)}_i(\mathbf{y}) \right) + \lambda^2 u^{(I)}_i(\mathbf{y}) \right) u^{(*,I)}_i(\mathbf{x}, \mathbf{y}; \lambda) \, d\Omega_I(\mathbf{y}) = 0,
\end{align}
for all $\mathbf{x} \in \Omega_I$. 

According to the definition of a fundamental solution (see Definition~1.3.5 in \cite{Ortner2015}), it follows that, in the sense of distributions,
\begin{align*}
\nabla \cdot \left( \sigma^{(I)}(\mathbf{x}) \nabla u^{*(I)}(\mathbf{x}, \mathbf{y}; \lambda) \right) + \lambda^2 u^{*(I)}(\mathbf{x}, \mathbf{y}; \lambda) = \delta(\mathbf{x} - \mathbf{y}),
\end{align*}
where $\delta(\mathbf{x} - \mathbf{y})$ denotes the Dirac delta distribution centered at $\mathbf{y}$.

In fact, it is sufficient to seek the fundamental solution in the form of a radial function, that is, a function of the form $u^{*(I)}\!\left(|\mathbf{x} - \mathbf{y}|; \lambda\right)$ satisfying, in the sense of distributions,
\begin{align*}
\nabla \cdot \left( \sigma^{(I)}(\mathbf{x}) \nabla u^{*(I)}(\mathbf{x}, \mathbf{y}; \lambda) \right) + \lambda^2 u^{*(I)}(\mathbf{x}, \mathbf{y}; \lambda) = \delta(\mathbf{x} - \mathbf{y}).
\end{align*}

Now, integrating by parts the expression given in \eqref{formInt1}, we obtain:
\begin{align*}
&\int_{\Omega_I} u^{(I)}(\mathbf{y}) \left( \nabla \cdot \left( \sigma^{(I)}(\mathbf{y}) \nabla u^{(*,I)}(\mathbf{x}, \mathbf{y}; \lambda) \right) + \lambda^2 u^{(*,I)}(\mathbf{x}, \mathbf{y}; \lambda) \right) \, d\Omega_I(\mathbf{y}) 
= \int_{\Gamma_I} u^{(I)}(\mathbf{y}) \, p^{(*,I)}(\mathbf{x}, \mathbf{y}; \lambda) \, d\Gamma_I(\mathbf{y}) \\
& - \int_{\Gamma_{IS}} p^{(I)}(\mathbf{y}) \, u^{(*,I)}(\mathbf{x}, \mathbf{y}; \lambda) \, d\Gamma_{IS}(\mathbf{y}).
\end{align*}
where
\[
p^{(*,I)}(\mathbf{x}, \mathbf{y}; \lambda)
= \sigma^{(I)}(\mathbf{y}) \nabla_{\mathbf{y}} u^{(*,I)}(\mathbf{x}, \mathbf{y}; \lambda)\cdot \mathbf{n}(\mathbf{y}),
\]
where $\mathbf{n}(\mathbf{y})$ is the outward unit normal vector at $\mathbf{y}$.

Since $u^{(*,I)}(\mathbf{x}, \mathbf{y}; \lambda)$ is the fundamental solution, the previous expression can be rewritten as follows:
\begin{align*}
\int_{\Omega_I} u^{(I)}(\mathbf{y}) \, \delta(\mathbf{x} - \mathbf{y}) \, d\Omega_I(\mathbf{y}) 
&= \int_{\Gamma_I} u^{(I)}(\mathbf{y}) \, p^{(*,I)}(\mathbf{x}, \mathbf{y}; \lambda) \, d\Gamma_I(\mathbf{y}) 
- \int_{\Gamma_{IS}} p^{(I)}(\mathbf{y}) \, u^{(*,I)}(\mathbf{x}, \mathbf{y}; \lambda) \, d\Gamma_{IS}(\mathbf{y}),
\end{align*}
and therefore, we obtain the following representation formula:
\begin{align}\label{repreuieI}
u^{(I)}(\mathbf{x}; \lambda) = \int_{\Gamma_I} u^{(I)}(\mathbf{y}) \, p^{(*,I)}(\mathbf{x}, \mathbf{y}; \lambda) \, d\Gamma_I(\mathbf{y}) 
- \int_{\Gamma_{IS}} p^{(I)}(\mathbf{y}) \, u^{(*,I)}(\mathbf{x}, \mathbf{y}; \lambda) \, d\Gamma_{IS}(\mathbf{y}),
\end{align}
for all $\mathbf{x} \in \Omega_I$. Note that the dependence of the electrical potential on $\lambda$ has been made explicit.

Thus, we have represented the electrical potential as a linear combination of single- and double-layer potentials. Within this framework, it is essential to determine the values of these potentials and the corresponding currents on the boundary of $\Omega_I$. However, not all boundary values can be obtained using only the boundary conditions in \eqref{BC1} and the perfect transmission conditions in \eqref{PCC1}--\eqref{PCC2}. In particular, the potential $u^{(I)}$ remains unknown on $\Gamma_I \setminus \Gamma_{IS}$, and on $\Gamma_{IS}$ we impose only the continuity of both the potentials and the currents. As a consequence, these values cannot be determined without also considering the behavior of the solution in the other regions, $\Omega_S$ and $\Omega_D$.

It is worth emphasizing that the superscript $(I)$ simply indicates that the analysis is restricted to the region $\Omega_I$. However, the method used to derive the representation formula \eqref{repreuieI} does not depend on the specific structure of the conductivity tensor $\sigma^{(I)}$. Consequently, analogous representations can be obtained for the potentials in the regions $\Omega_S$ and $\Omega_D$.

We have
\begin{align}\label{repreuieS}
u^{(S)}(\mathbf{x}; \lambda) &= \int_{\Gamma_S} u^{(S)}(\mathbf{y}) \, p^{(*,S)}(\mathbf{x}, \mathbf{y}; \lambda) \, d\Gamma_S(\mathbf{y}) 
- \int_{\Gamma_{IS}} p^{(S)}(\mathbf{y}) \, u^{(*,S)}(\mathbf{x}, \mathbf{y}; \lambda) \, d\Gamma_{IS}(\mathbf{y}) \\
\nonumber
&\quad - \int_{\Gamma_{SD}} p^{(S)}(\mathbf{y}) \, u^{(*,S)}(\mathbf{x}, \mathbf{y}; \lambda) \, d\Gamma_{SD}(\mathbf{y}),
\end{align}
for all $\mathbf{x} \in \Omega_S$.

Similarly, the representation for $u_{i,e}^{(D)}$ in $\Omega_D$ is given by:
\begin{align}\label{repreuieD}
u^{(D)}(\mathbf{x}; \lambda) &= \int_{\Gamma_D} u^{(D)}(\mathbf{y}) \, p^{(*,D)}(\mathbf{x}, \mathbf{y}; \lambda) \, d\Gamma_D(\mathbf{y}) 
- \int_{\Gamma_{SD}} p^{(D)}(\mathbf{y}) \, u^{(*,D)}(\mathbf{x}, \mathbf{y}; \lambda) \, d\Gamma_{SD}(\mathbf{y}),
\end{align}
for all $\mathbf{x} \in \Omega_D$.

Recall that, analogously to the procedure described for $u^{(I)},$ in the previous expressions $u^{(*,j)}(\mathbf{x}, \mathbf{y}; \lambda)$ denotes the fundamental solution of the anisotropic Helmholtz operator associated with the tensor $\sigma^{(j)}$, now defined in the spatial region $\Omega_j$, and
\[
p^{(*,j)}(\mathbf{x}, \mathbf{y}; \lambda)
= \sigma^{(j)}(\mathbf{y}) \nabla_{\mathbf{y}} u^{(*,j)}(\mathbf{x}, \mathbf{y}; \lambda)\cdot \mathbf{n}(\mathbf{y}).
\]
for $j = S, D$.

The next step consists in determining the values of the electrical potential and the corresponding currents along the external boundary of $\Omega$ and at the interfaces between the subregions composing the domain. This is necessary to evaluate the single- and double-layer potentials on the right-hand side of the representation formulas \eqref{repreuieI}--\eqref{repreuieD}. These boundary values are obtained as the solution of a system of Fredholm integral equations of the second kind, leading to a coupled system of boundary integral equations. This system is derived by taking the limit in \eqref{repreuieI}--\eqref{repreuieD} as $\mathbf{x}$ approaches a point on the boundary.

Before proceeding further, we derive explicit expressions for the fundamental solutions associated with the electrical potentials, as well as for the corresponding current fluxes. In our setting, the governing operator is an anisotropic Helmholtz operator. As will be shown in the next section, these fundamental solutions can be expressed in terms of Bessel functions. 

\subsection{Fundamental solutions of Helmholtz equations for each region}

We begin by focusing on the region $\Omega_I$. We derive an explicit expression for the fundamental solution of the operator
\begin{align}\label{Helm1}
\Delta_{\mathbf{y}}^{\left(\sigma_l,\sigma_t\right)} + \lambda^2,
\end{align}
in terms of known functions. Here,
\begin{align*}
\Delta_{\mathbf{y}}^{\left(\sigma_l,\sigma_t\right)} = \sigma_l \frac{\partial^2}{\partial y_1^2} + \sigma_t \frac{\partial^2}{\partial y_2^2}
\end{align*}
denotes the anisotropic Laplacian with respect to the variable $\mathbf{y}$, which corresponds to an anisotropic Helmholtz operator with constant coefficients, and $\lambda$ is a real parameter.

As mentioned earlier, a distribution $u^{*} \in \mathcal{D}'(\mathbb{R}^2)$ (where $\mathcal{D}'(\mathbb{R}^2)$ denotes the space of distributions on $\mathbb{R}^2$, see Definition~1.1.4 in \cite{Ortner2015}) is said to be a fundamental solution of the operator $\Delta_{\mathbf{y}}^{\left(\sigma_l,\sigma_t\right)} + \lambda^2$ if it satisfies the equation
\begin{align*}
\Delta_{\mathbf{y}}^{\left(\sigma_l,\sigma_t\right)} u^{*} + \lambda^2 u^{*} = \delta \quad \text{in } \mathcal{D}'(\mathbb{R}^2),
\end{align*}
where $\delta$ denotes the Dirac delta distribution centered at the origin.

We now introduce the following change of variables:
\begin{align*}
\bar{y}_1 := \frac{y_1}{\sqrt{\sigma_l}}, \quad \bar{y}_2 := \frac{y_2}{\sqrt{\sigma_t}},
\end{align*}
which leads to the following equation in terms of the new variable $\bar{\mathbf{y}} = (\bar{y}_1, \bar{y}_2)$:
\begin{align}\label{Helm2}
\Delta_{\bar{\mathbf{y}}} u^{*} + \lambda u^{*} = \frac{1}{\sqrt{\sigma_l \sigma_t}} \, \delta \quad \text{in } \mathcal{D}'(\mathbb{R}^2_{\bar{\mathbf{y}}}),
\end{align}
where $\Delta$ denotes the classical Laplacian. We use the notation $\mathcal{D}'(\mathbb{R}^2_{\bar{\mathbf{y}}})$ to emphasize that equation \eqref{Helm2} is understood in the sense of distributions, with test functions defined in terms of the variable $\bar{\mathbf{y}}$.

On the left-hand side of \eqref{Helm2}, we now obtain the standard Helmholtz operator, while on the right-hand side the Dirac delta distribution is scaled by the factor $\dfrac{1}{\sqrt{\sigma_l \sigma_t}}$, as detailed in Example~1.2.8 of \cite{Ortner2015}. This transformation reduces the anisotropic problem to an equivalent isotropic one with constant coefficients.

The fundamental solution of the Helmholtz operator has been derived previously; for instance, \cite{Ortner2015} provides a general formula valid in arbitrary dimensions. In our case, we focus on the two-dimensional setting. Therefore, the fundamental solution to the Helmholtz problem \eqref{Helm1} is given by
\begin{align}
u^{(*,I)}(\mathbf{x}, \mathbf{y}; \lambda) = \frac{1}{4\sqrt{\sigma_l \sigma_t}} \, N_0\left(\lambda \, r^{(I)}(\mathbf{x}, \mathbf{y})\right),
\end{align}
where
\begin{equation*}
r^{(I)}(\mathbf{x}, \mathbf{y}) = \sqrt{ \frac{(x_1 - y_1)^2}{\sigma_l} + \frac{(x_2 - y_2)^2}{\sigma_t} },
\end{equation*}
which reflects the anisotropic scaling induced by the conductivity tensor, and $N_0(x)$ denotes the Bessel function of the second kind of order zero.

The expressions for the fundamental solutions in the regions $\Omega_S$ and $\Omega_D$ can be derived in an analogous manner. In these cases, we obtain
\begin{align}
u^{(*,S)}(\mathbf{x}, \mathbf{y}; \lambda) = \frac{1}{4\sqrt{\sigma_l \sigma_t}} \, N_0\left(\lambda \, r^{(S)}(\mathbf{x}, \mathbf{y})\right),
\end{align}
where
\begin{equation*}
r^{(S)}(\mathbf{x}, \mathbf{y}) = \sqrt{ \frac{(x_1 - y_1)^2}{\sigma_t} + \frac{(x_2 - y_2)^2}{\sigma_l} },
\end{equation*}
and
\begin{align}
u^{(*,D)}(\mathbf{x}, \mathbf{y}; \lambda) = \frac{1}{4\sqrt{\sigma_l \sigma_t}} \, N_0\left(\lambda \, r^{(D)}(\mathbf{x}, \mathbf{y})\right),
\end{align}
where
\begin{equation*}
r^{(D)}(\mathbf{x}, \mathbf{y}) = \sqrt{ \frac{(x_1 - y_1)^2}{\sigma_l} + \frac{(x_2 - y_2)^2}{\sigma_t} }.
\end{equation*}

An explicit expression for the Bessel function of the second kind of order zero, $N_0(x)$, is given by
\begin{align}
N_0(x) = \frac{2}{\pi} J_0(x) \left( \ln \frac{x}{2} + \gamma \right) 
- \frac{2}{\pi} \sum_{n = 0}^{\infty} (-1)^n \frac{1}{(n^2)!} \left( \frac{x}{2} \right)^{2n} \left( \sum_{j = 1}^{n} \frac{1}{j} \right),
\end{align}
where $\gamma$ denotes the Euler constant. It is important to note that $N_0(x)$ exhibits a logarithmic singularity at $x = 0$. This expansion can also be found in \cite{Bell1968}.

Furthermore, we derive an explicit expression for the function $p^{(*,I)}(\mathbf{x}, \mathbf{y}; \lambda)$. Given the structure of the matrix $\sigma^{(I)}$, we have
\begin{align*}
p^{(*,I)}(\mathbf{x}, \mathbf{y}; \lambda) = \nabla_{\mathbf{y}}^{\left(\sigma_l, \sigma_t\right)} u^{(*,I)}(\mathbf{x}, \mathbf{y}; \lambda) \cdot \mathbf{n}(\mathbf{y}),
\end{align*}
where the anisotropic gradient is defined as
\begin{align*}
\nabla_{\left(\sigma_l, \sigma_t\right)} = \left( \sigma_l \frac{\partial}{\partial y_1}, \; \sigma_t \frac{\partial}{\partial y_2} \right).
\end{align*}

Carrying out straightforward calculations, we obtain
\begin{align}
p^{(*,I)}(\mathbf{x}, \mathbf{y}; \lambda) 
= \frac{\lambda}{4\sqrt{\sigma_l \sigma_t}} 
\, \frac{(\mathbf{x} - \mathbf{y}) \cdot \mathbf{n}(\mathbf{y})}{r^{(I)}(\mathbf{x}, \mathbf{y})} 
\, N_1\left(\lambda r^{(I)}(\mathbf{x}, \mathbf{y})\right),
\end{align}
where $N_1(x)$ is the Bessel function of the second kind of order one, and $(\cdot, \cdot)$ denotes the standard scalar product in $\mathbb{R}^2$.

In deriving the above expression, we made use of the recurrence relation for the derivative of the Bessel function of the second kind, given by
\begin{align*}
\frac{d}{dx} N_n(x) = -N_{n+1}(x) + \frac{n}{x} N_n(x).
\end{align*}
In our case, this identity is applied with $n = 0$. For further details, see \cite{Korenev2002}.

The resulting expressions represent the fundamental current flux within the region $\Omega_I$.

In the other regions, we obtain
\begin{align}
p^{(*,S)}(\mathbf{x}, \mathbf{y}; \lambda) 
= \frac{\lambda}{4\sqrt{\sigma_l \sigma_t}} 
\, \frac{(\mathbf{x} - \mathbf{y}) \cdot \mathbf{n}(\mathbf{y})}{r^{(S)}(\mathbf{x}, \mathbf{y})} 
\, N_1\left(\lambda \, r^{(S)}(\mathbf{x}, \mathbf{y})\right),
\end{align}
and
\begin{align}
p^{(*,D)}(\mathbf{x}, \mathbf{y}; \lambda) 
= \frac{\lambda}{4\sqrt{\sigma_l \sigma_t}} 
\, \frac{(\mathbf{x} - \mathbf{y}) \cdot \mathbf{n}(\mathbf{y})}{r^{(D)}(\mathbf{x}, \mathbf{y})} 
\, N_1\left(\lambda \, r^{(D)}(\mathbf{x}, \mathbf{y})\right).
\end{align}

An explicit expression for $N_1(x)$, as provided in \cite{Korenev2002}, is given by
\begin{align*}
N_1(x) &= \frac{2}{\pi}J_1(x)\left(\ln \frac{x}{2} + \gamma\right) - \frac{1}{\pi}\left(\frac{x}{2}\right)^{-1}
- \frac{1}{\pi}\sum^{\infty}_{n = 0}(-1)^{n}\frac{\left(x/2\right)^{2n+1}}{n!(n+1)!}\left(\sum^{n}_{k = 1}\frac{1}{k} + \frac{1}{2(n+1)}\right).
\end{align*}

From the expression above, it follows that the function $N_1(x)$ exhibits a singularity at $x = 0$, with asymptotic behavior proportional to $1/x$. This singularity is stronger than that of the function $N_0(x)$. As a consequence, the single- and double-layer potentials, as expressed in \eqref{repreuieI}--\eqref{repreuieS}--\eqref{repreuieD}, may develop singular behavior as the point $\mathbf{x}$ approaches the boundary of $\Omega$ or the interfaces between the subregions composing $\Omega$.

In view of this, it is essential to analyze carefully the behavior of these potentials in the vicinity of the boundary. This will be the main focus of the following section.

\section{Boundary limits of single-layer and double-layer potentials}\label{BIEsect}
In order to formulate the boundary integral equations required to determine the boundary values of the intracellular and extracellular potentials and currents, it is essential to analyze the behavior of the single-layer and double-layer potentials near the boundary. In this section, we present the key results obtained for this purpose.

Before introducing the main findings, we shall establish some notation that will be used throughout the section.

We assume that \( \Omega \subset \mathbb{R}^2 \) is a bounded domain with a boundary \( \partial \Omega \) of piecewise \( C^2 \) regularity. Specifically, we assume the existence of a finite set of points \( \left\{\mathbf{z}_1,\ldots,\mathbf{z}_l\right\} \subset \partial\Omega \) such that, for every point \( \mathbf{z} \in \partial \Omega \setminus \left\{\mathbf{z}_1,\ldots,\mathbf{z}_l\right\} \), there exists a neighborhood \( V_\mathbf{z} \) of \( \mathbf{z} \) in which all points of \( V_\mathbf{z} \cap \partial \Omega \) can be locally represented by a parameterization of class \( C^2 \). Conversely, at the points \( \left\{\mathbf{z}_1,\ldots,\mathbf{z}_l\right\} \), the boundary \( \partial \Omega \) exhibits corners.

At each regular point \( \mathbf{z} \in \partial \Omega \setminus \left\{\mathbf{z}_1,\ldots,\mathbf{z}_l\right\} \), we define the unit tangent vector \( \mathbf{t}(\mathbf{z}) \) to \( \partial \Omega \), and choose its orientation such that the outward unit normal vector \( \mathbf{n}(\mathbf{z}) \) satisfies the standard right-hand rule. Note that both the tangent and normal vectors may exhibit discontinuities at the corner points \( \left\{\mathbf{z}_1,\ldots,\mathbf{z}_l\right\} \).

If \( D \) is a closed domain containing \( \partial \Omega \), we denote by \( C^{0,\alpha}(D) \), for \( 0 < \alpha < 1 \), the space of Hölder continuous functions on \( D \) with Hölder exponent \( \alpha \). We denote by \( C(\partial \Omega) \) the space of continuous functions defined on \( \partial \Omega \).

Now, let us introduce some notation. Let \( a, b \) be positive real constants. We define
\begin{align}\label{Phiab}
\Phi^{(a,b)}_{\lambda}(\mathbf{x}, \mathbf{y}) := \frac{1}{4\sqrt{ab}}N_0\left(\lambda\, r^{(a,b)}(\mathbf{x}, \mathbf{y})\right),
\end{align}
where
\begin{align*}
r^{(a,b)}(\mathbf{x}, \mathbf{y}) := \sqrt{\frac{(x_1 - y_1)^2}{a} + \frac{(x_2 - y_2)^2}{b}},
\end{align*}
and we assume that \( \lambda > 0 \). For \( \lambda = 0 \), we define
\begin{align*}
\Phi^{(a,b)}_{0}(\mathbf{x}, \mathbf{y}) := \frac{1}{\sqrt{ab}} \ln \left(r^{(a,b)}(\mathbf{x}, \mathbf{y})\right).
\end{align*}

As demonstrated earlier, we can establish that
\begin{align}\label{GradPhiab}
\nabla^{(a,b)}_{\mathbf{y}} \Phi_{\lambda}^{(a,b)}(\mathbf{x}, \mathbf{y}) \cdot \mathbf{n}(\mathbf{y}) 
= \frac{\lambda}{4\sqrt{ab}} \frac{(\mathbf{x} - \mathbf{y}, \mathbf{n}(\mathbf{y}))}{r^{(a,b)}(\mathbf{x}, \mathbf{y})} 
N_1\left(\lambda\, r^{(a,b)}(\mathbf{x}, \mathbf{y})\right),
\end{align}
where \( \nabla^{(a,b)}_{\mathbf{y}} \) denotes the anisotropic gradient with respect to the variable \( \mathbf{y} \), as previously defined.

In the case \( \lambda = 0 \), we have
\begin{align*}
\nabla^{(a,b)}_{\mathbf{y}} \Phi_{0}^{(a,b)}(\mathbf{x}, \mathbf{y}) \cdot \mathbf{n}(\mathbf{y}) 
= \frac{1}{\sqrt{ab}} \frac{\left(\mathbf{x} - \mathbf{y}, \mathbf{n}(\mathbf{y})\right)}{\left(r^{(a,b)}(\mathbf{x}, \mathbf{y})\right)^2}.
\end{align*}

Let us give the following proposition.
\begin{proposition}\label{prop1}
Let \( \mathbf{x}, \mathbf{y} \in \partial \Omega \), with \( \mathbf{y} \) belonging to a neighborhood of \( \mathbf{x} \). Then, there exists a constant \( L > 0 \) such that
\begin{align}
\left| \left( \mathbf{n}(\mathbf{y}), \mathbf{x} - \mathbf{y} \right) \right| &\leq L \left| \mathbf{x} - \mathbf{y} \right|^2, \label{inq0} \\
\left| \left( \mathbf{n}(\mathbf{x}), \mathbf{n}(\mathbf{y}) \right) \right| &\leq L \left| \mathbf{x} - \mathbf{y} \right|. \label{inq1}
\end{align}
\end{proposition}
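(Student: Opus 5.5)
The plan is to work locally on a single $C^2$ arc of $\partial\Omega$ and obtain both estimates from a second-order Taylor expansion of an arclength parametrization. Fix a regular point $\mathbf{x}\in\partial\Omega\setminus\{\mathbf{z}_1,\ldots,\mathbf{z}_l\}$ and take the neighborhood $V_{\mathbf{x}}$ from the piecewise $C^2$ assumption. In it, $\partial\Omega\cap V_{\mathbf{x}}$ is the image of a $C^2$ map $\gamma:(-\delta,\delta)\to\mathbb{R}^2$. Reparametrize by arclength so that $|\gamma'|\equiv 1$, $\gamma(0)=\mathbf{x}$, $\mathbf{t}(\gamma(s))=\gamma'(s)$, and $\mathbf{n}(\gamma(s))=R\gamma'(s)$, where $R$ is the fixed rotation by $-\pi/2$ given by the right-hand rule. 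Shrinking $\delta$, set $K:=\sup_{|s|\le\delta}|\gamma''(s)|<\infty$. Write $\mathbf{y}=\gamma(s)$.

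\textbf{Step 1 (chord versus arc).} Since $\gamma'(0)$ is a unit vector and $\gamma\in C^2$, we have $|\gamma(s)-\gamma(0)-s\gamma'(0)|\le \tfrac{K}{2}s^2$. Hence $|\mathbf{x}-\mathbf{y}|\ge |s|-\tfrac K2 s^2\ge \tfrac12|s|$ for $|s|$ small. This comparability $|s|\le 2|\mathbf{x}-\mathbf{y}|$ converts every bound in $|s|$ into a bound in $|\mathbf{x}-\mathbf{y}|$.

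\textbf{Step 2 (estimate \eqref{inq0}).} Expand about $s$ rather than $0$:
\[
\mathbf{x}-\mathbf{y}=\gamma(0)-\gamma(s)=-s\gamma'(s)+\rho(s),\qquad |\rho(s)|\le \tfrac K2 s^2 .
\]
Since $(\mathbf{n}(\mathbf{y}),\gamma'(s))=(R\gamma'(s),\gamma'(s))=0$, the linear term drops out, so $|(\mathbf{n}(\mathbf{y}),\mathbf{x}-\mathbf{y})|\le \tfrac K2 s^2\le 2K|\mathbf{x}-\mathbf{y}|^2$ by Step 1.

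\textbf{Step 3 (estimate \eqref{inq1}).} Read literally, \eqref{inq1} cannot hold: $(\mathbf{n}(\mathbf{x}),\mathbf{n}(\mathbf{y}))\to 1$ as $\mathbf{y}\to\mathbf{x}$. I would therefore prove the intended statement, $|\mathbf{n}(\mathbf{x})-\mathbf{n}(\mathbf{y})|\le L|\mathbf{x}-\mathbf{y}|$, which also gives $|(\mathbf{t}(\mathbf{x}),\mathbf{n}(\mathbf{y}))|\le L|\mathbf{x}-\mathbf{y}|$ (presumably the quantity needed later). By the mean value theorem, $|\mathbf{n}(\mathbf{y})-\mathbf{n}(\mathbf{x})|=|R(\gamma'(s)-\gamma'(0))|\le K|s|\le 2K|\mathbf{x}-\mathbf{y}|$. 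Then $(\mathbf{t}(\mathbf{x}),\mathbf{n}(\mathbf{y}))=(\mathbf{t}(\mathbf{x}),\mathbf{n}(\mathbf{y})-\mathbf{n}(\mathbf{x}))$, using $(\mathbf{t}(\mathbf{x}),\mathbf{n}(\mathbf{x}))=0$. Take $L=2K$.

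\textbf{Main obstacle.} The difficulty is uniformity near the corners $\mathbf{z}_k$. There the normal jumps, and if $\mathbf{x}$ and $\mathbf{y}$ lie on different smooth pieces, neither bound holds. For instance, with $\mathbf{y}$ on one side of a corner of the rectangle and $\mathbf{x}$ on the adjacent side, $|(\mathbf{n}(\mathbf{y}),\mathbf{x}-\mathbf{y})|\sim|\mathbf{x}-\mathbf{y}|$. So I would state the proposition for $\mathbf{x},\mathbf{y}$ on the same closed $C^2$ arc. Uniformity of $L$ then follows by using each arc's $C^2$ parametrization up to its endpoints, taking $K$ as the maximum curvature over the finitely many arcs, and using compactness to fix the neighborhood size. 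For the rectangle $\Omega$ of Section~2 every arc is a straight segment, so $K=0$: both quantities vanish identically on a common side, and the estimates are trivial there.
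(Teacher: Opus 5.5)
Your proof is correct. Its core is the paper's: a local $C^2$ parametrization, comparability of parameter and chord length, and a second-order expansion whose first-order term vanishes because tangent and normal are orthogonal.

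The execution differs in ways that matter for rigor. The paper expands $f(t)=\mathbf{n}(\mathbf{y}(t))\cdot(\mathbf{y}(t_0)-\mathbf{y}(t))$ about $t_0$ and invokes $f''$. That differentiates the normal twice, so it tacitly needs $C^3$. It can be repaired by noting $f'(t)=(\mathbf{n}\circ\mathbf{y})'(t)\cdot(\mathbf{y}(t_0)-\mathbf{y}(t))$, hence $|f'(t)|\le C|t-t_0|$. By expanding $\gamma(0)$ about $s$, so that $\mathbf{n}(\mathbf{y})$ kills the linear term directly, you need only $\sup|\gamma''|$. Your Step~1 also proves the lower chord bound properly. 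The paper instead derives it from a mean value equality $|\mathbf{y}(t_0)-\mathbf{y}(t)|=|\mathbf{y}'(t_0^*)|\,|t-t_0|$, which does not hold for vector-valued maps.

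Your two caveats are correct and go beyond the paper.
\begin{itemize}
\item As printed, \eqref{inq1} is false, and the paper only asserts that it ``follows by similar arguments.'' What is used later is exactly the bound $|\mathbf{n}(\mathbf{x})-\mathbf{n}(\mathbf{y})|\le L|\mathbf{x}-\mathbf{y}|$, as in Colton and Kress~\cite{Colton2013}, together with its consequence for $(\mathbf{t}(\mathbf{x}),\mathbf{n}(\mathbf{y}))$, both of which you prove. The paper needs the first for the decay of $\mathbf{n}(\mathbf{y})-\mathbf{n}(\mathbf{z})$ in the normal-derivative proposition. It needs the second to keep $\mathbf{n}(\mathbf{y})\cdot\mathbf{t}(\mathbf{x})$ away from $1$ in the proof of Theorem~\ref{theo2}.
\item The paper's compactness claim of a single $L$ for all $\mathbf{x},\mathbf{y}\in\partial\Omega$ fails across corners, as your rectangle example shows. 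Restricting to a common closed arc is therefore necessary. Note that uniformity along an arc uses $C^2$ regularity up to its endpoints. That is slightly more than the paper's local definition of piecewise $C^2$ guarantees, though it is trivial for the rectangle.
\end{itemize}
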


\begin{proof}
In the case of \eqref{inq0}, assume that in a neighborhood \( V_{\mathbf{x}} \subset \mathbb{R}^2 \) of \( \mathbf{x} \), the boundary \( \partial \Omega \) admits a local \( C^2 \) parametrization. That is, for all \( \mathbf{y} \in V_{\mathbf{x}} \cap \partial \Omega \), we can write \( \mathbf{y} = \mathbf{y}(t) \) and \( \mathbf{x} = \mathbf{y}(t_0) \), with \( t \in I_{t_0} \), where \( I_{t_0} \) is a sufficiently small neighborhood of \( t_0 \).

Applying Taylor's expansion to \( \mathbf{y}(t) \) around \( t_0 \), we obtain
\begin{align*}
\left| \mathbf{x} - \mathbf{y}(t) \right|^2 = \left| \mathbf{y}(t_0) - \mathbf{y}(t) \right|^2 = \left| \mathbf{y}'(t_0^*) \right|^2 (t - t_0)^2,
\end{align*}
for some \( t_0^* \in I_{t_0} \), by the mean value theorem and the regularity of \( \mathbf{y} \). Hence, we deduce that
\begin{align*}
(t - t_0)^2 = \frac{1}{\left| \mathbf{y}'(t_0^*) \right|^2} \left| \mathbf{x} - \mathbf{y}(t) \right|^2.
\end{align*}
Since \( \left| \mathbf{y}'(t_0^*) \right| > 0 \), the inverse is bounded above by a constant in the neighborhood, and thus
\begin{align*}
(t - t_0)^2 \leq C \left| \mathbf{x} - \mathbf{y}(t) \right|^2
\end{align*}
for some constant \( C > 0 \). 

Next, let us consider the function
\[
f(t) := \mathbf{n}(\mathbf{y}(t)) \cdot \left( \mathbf{y}(t_0) - \mathbf{y}(t) \right),
\]
defined for \( t \in I_{t_0} \). Note that \( f(t_0) = 0 \) and \( f'(t_0) = 0 \). Indeed, we compute
\[
f(t_0) = \mathbf{n}(\mathbf{y}(t_0)) \cdot \left( \mathbf{y}(t_0) - \mathbf{y}(t_0) \right) = \mathbf{n}(\mathbf{y}(t_0)) \cdot \mathbf{0} = 0.
\]

Differentiating \( f(t) \) with respect to \( t \), we obtain
\[
f'(t) = \frac{d}{dt} \left( \mathbf{n}(\mathbf{y}(t)) \cdot \left( \mathbf{y}(t_0) - \mathbf{y}(t) \right) \right) = \left( \frac{d}{dt} \mathbf{n}(\mathbf{y}(t)) \right) \cdot \left( \mathbf{y}(t_0) - \mathbf{y}(t) \right) - \mathbf{n}(\mathbf{y}(t)) \cdot \mathbf{y}'(t).
\]
Evaluating at \( t = t_0 \), we find
\[
f'(t_0) = \left( \left. \frac{d}{dt} \mathbf{n}(\mathbf{y}(t)) \right|_{t = t_0} \right) \cdot \left( \mathbf{y}(t_0) - \mathbf{y}(t_0) \right) - \mathbf{n}(\mathbf{y}(t_0)) \cdot \mathbf{y}'(t_0) = 0.
\]

In the last term, we have used the fact that
\[
\mathbf{n}(\mathbf{y}(t_0)) \cdot \mathbf{y}'(t_0) = \mathbf{n}(\mathbf{x}) \cdot \mathbf{t}(\mathbf{x}) = 0,
\]
since the normal and tangent vectors at \( \mathbf{x} = \mathbf{y}(t_0) \) are orthogonal with respect to the Euclidean scalar product.

Thus, we obtain
\[
\mathbf{n}(\mathbf{y}) \cdot (\mathbf{x} - \mathbf{y}) = f''(t_0^{**})(t - t_0)^2 = 
\frac{f''(t_0^{**})}{2|\mathbf{y}'(t_0^{*})|}|\mathbf{x} - \mathbf{y}|^2, \quad \text{for all } t \in I_{t_0},
\]
for some \( t_0^{**} \in I_{t_0} \). Therefore, we conclude that there exists a constant \( L(\mathbf{x}) > 0 \), depending on \( \mathbf{x} \), such that
\[
\left| \mathbf{n}(\mathbf{y}) \cdot (\mathbf{x} - \mathbf{y}) \right| \leq L(\mathbf{x})|\mathbf{x} - \mathbf{y}|^2.
\]

Since \( \partial \Omega \) is compact, it is possible to show that there exists a constant \( L > 0 \), independent of \( \mathbf{x} \), for which the above inequality holds uniformly for all \( \mathbf{x}, \mathbf{y} \in \partial \Omega \).

The proof of inequality \eqref{inq1} follows by similar arguments.
\end{proof}

As discussed in previous sections, we focus on single-layer and double-layer potentials, defined as
\begin{align}
\mathcal{K}_{\lambda}^{(a,b)}[\varphi](\mathbf{x}) &:= \int_{\partial \Omega} \Phi_{\lambda}^{(a,b)}(\mathbf{x}, \mathbf{y})\, \varphi(\mathbf{y})\, ds(\mathbf{y}), \label{SP} \\
\mathcal{W}_{\lambda}^{(a,b)}[\varphi](\mathbf{x}) &:= \int_{\partial \Omega} \nabla^{(a,b)}_{\mathbf{y}} \Phi_{\lambda}^{(a,b)}(\mathbf{x}, \mathbf{y}) \cdot \mathbf{n}(\mathbf{y})\, \varphi(\mathbf{y})\, ds(\mathbf{y}), \label{DP}
\end{align}
where $\mathbf{x} \in \mathbb{R}^2 \setminus \partial \Omega$, and $\varphi$ is a function defined on $\partial \Omega$. Since $\mathbf{x}$ and $\mathbf{y}$ are distinct, it can be shown that both $\Phi_{\lambda}^{(a,b)}(\mathbf{x}, \mathbf{y})$ and $\nabla^{(a,b)}_{\mathbf{y}} \Phi_{\lambda}^{(a,b)}(\mathbf{x}, \mathbf{y}) \cdot \mathbf{n}(\mathbf{y})$ satisfy the anisotropic Helmholtz equation with respect to the variable $\mathbf{x}$. Therefore, $\mathcal{K}_{\lambda}^{(a,b)}[\varphi](\mathbf{x})$ and $\mathcal{W}_{\lambda}^{(a,b)}[\varphi](\mathbf{x})$ are also solutions in $\Omega$. Consequently, both the single-layer and double-layer potentials are analytic throughout $\Omega$.

However, it is important to note that these functions are not inherently defined on $\partial \Omega$ due to the presence of a singularity at $\mathbf{x} = \mathbf{y}$. Consequently, in the following sections, we will investigate the conditions under which the functions $\Phi_{\lambda}^{(a,b)}(\mathbf{x}, \mathbf{y})$ and $\nabla^{(a,b)}_{\mathbf{y}} \Phi_{\lambda}^{(a,b)}(\mathbf{x}, \mathbf{y}) \cdot \mathbf{n}(\mathbf{y})$ ensure that the corresponding potentials are well defined on $\partial \Omega$. We will show that it is sufficient for these functions to exhibit weak singularities.

The notion of a \emph{weakly singular kernel} is discussed in \cite{Colton2013}. However, we will introduce a slightly modified definition that is better suited to our specific context.

\begin{definition}
Let $K(\mathbf{x}, \mathbf{y})$ be a continuous function defined on $\partial \Omega \times \partial \Omega$, for $\mathbf{x} \neq \mathbf{y}$. We say that $K(\mathbf{x}, \mathbf{y})$ is \emph{weakly singular} on $\partial \Omega$ if, for every $\mathbf{x} \in \partial \Omega$, there exists a neighborhood $V_{\mathbf{x}} \subset \mathbb{R}^2$ and positive constants $L$ and $\alpha \in (0,2]$ such that
\begin{align}\label{condWS}
\left|K(\mathbf{x}, \mathbf{y})\right| \leq L \left|\mathbf{x} - \mathbf{y}\right|^{\alpha - 2},
\end{align}
for all $\mathbf{y} \in V_{\mathbf{x}} \cap \partial \Omega$ with $\mathbf{y} \neq \mathbf{x}$.
\end{definition}

\begin{theorem}\label{theo1}
The fundamental solution $\Phi_{\lambda}^{(a,b)}(\mathbf{x}, \mathbf{y})$ is weakly singular on $\partial \Omega$, whereas the expression $\nabla^{(a,b)}_{\mathbf{y}}\Phi_{\lambda}^{(a,b)}(\mathbf{x}, \mathbf{y})\cdot \mathbf{n}(\mathbf{y})$ remains bounded on $\partial \Omega \cap \left\{\mathbf{x} \neq \mathbf{y}\right\}$.
\end{theorem}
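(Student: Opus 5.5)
Two kernel estimates are needed, and both follow from the small-argument behaviour of the Bessel functions $N_0$ and $N_1$ given above, combined with the fact that the anisotropic distance $r^{(a,b)}$ is comparable to the Euclidean one. The first step is that comparability. With $m=\min\{a,b\}$ and $M=\max\{a,b\}$,
\[
\frac{|\mathbf{x}-\mathbf{y}|}{\sqrt{M}} \le r^{(a,b)}(\mathbf{x},\mathbf{y}) \le \frac{|\mathbf{x}-\mathbf{y}|}{\sqrt{m}},
\]
which holds because $r^{(a,b)}$ is a weighted Euclidean norm. So for $\mathbf{y}$ in a bounded neighbourhood $V_{\mathbf{x}}$, the argument $\lambda r^{(a,b)}$ is small exactly when $|\mathbf{x}-\mathbf{y}|$ is small. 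It also stays in a compact set, so away from $\mathbf{x}=\mathbf{y}$ both kernels are continuous and bounded.

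\textbf{Single layer.} From the series for $N_0$, write $N_0(z)=\frac{2}{\pi}J_0(z)\ln z + R_0(z)$, where $R_0$ and $J_0$ are continuous, hence bounded, on compact sets. This gives $|N_0(z)|\le C(1+|\ln z|)$ for $0<z\le Z$, and therefore
\[
|\Phi^{(a,b)}_\lambda(\mathbf{x},\mathbf{y})|\le C\bigl(1+|\ln|\mathbf{x}-\mathbf{y}||\bigr).
\]
A logarithm is dominated by every negative power: for any fixed $\alpha\in(0,2)$, e.g. $\alpha=1$, we have $1+|\ln s|\le C_\alpha s^{\alpha-2}$ for $0<s\le \operatorname{diam}\Omega$. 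This yields \eqref{condWS}. The case $\lambda=0$ is the same, since the kernel is then a pure logarithm.

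\textbf{Double layer.} Use \eqref{GradPhiab}. The series for $N_1$ gives $N_1(z)=-\frac{2}{\pi z}+O(|z\ln z|)$ as $z\to0$, so $|N_1(z)|\le C/z$ on $(0,Z]$. Hence
\[
\Bigl|\nabla^{(a,b)}_{\mathbf{y}}\Phi^{(a,b)}_\lambda\cdot\mathbf{n}(\mathbf{y})\Bigr|
\le \frac{\lambda}{4\sqrt{ab}}\,\frac{|(\mathbf{x}-\mathbf{y},\mathbf{n}(\mathbf{y}))|}{r^{(a,b)}}\cdot\frac{C}{\lambda r^{(a,b)}}
\le C'\,\frac{|(\mathbf{x}-\mathbf{y},\mathbf{n}(\mathbf{y}))|}{|\mathbf{x}-\mathbf{y}|^2}.
\]
Inequality \eqref{inq0} of Proposition~\ref{prop1} then bounds the numerator by $L|\mathbf{x}-\mathbf{y}|^2$, so the kernel is bounded by $C'L$ near the diagonal. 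Away from the diagonal it is bounded by continuity on a compact set. The case $\lambda=0$ is identical, using the explicit kernel $(\mathbf{x}-\mathbf{y},\mathbf{n}(\mathbf{y}))/(\sqrt{ab}\,(r^{(a,b)})^2)$.

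\textbf{Main obstacle.} The delicate step is the double-layer bound, because it relies entirely on \eqref{inq0}. That inequality is proved for $\mathbf{x}$ and $\mathbf{y}$ on a common $C^2$ arc. Since the statement concerns $\partial\Omega\cap\{\mathbf{x}\ne\mathbf{y}\}$, I would handle the geometry as follows. For the rectangular subdomains here, every side is a straight segment, so $(\mathbf{x}-\mathbf{y},\mathbf{n}(\mathbf{y}))=0$ whenever $\mathbf{x}$ and $\mathbf{y}$ lie on the same side, and the kernel vanishes identically there. When $\mathbf{x}$ and $\mathbf{y}$ lie on different sides meeting at a corner, the bound $|(\mathbf{x}-\mathbf{y},\mathbf{n}(\mathbf{y}))|\le|\mathbf{x}-\mathbf{y}|$ only gives a kernel of size $O(|\mathbf{x}-\mathbf{y}|^{-1})$. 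So I would state the boundedness for $\mathbf{x}$ a regular point with $\mathbf{y}$ in its neighbourhood $V_{\mathbf{x}}$ chosen inside a single smooth piece, consistent with the local nature of the definition. I would then note that near corners the kernel is merely of order $|\mathbf{x}-\mathbf{y}|^{-1}$, which still satisfies \eqref{condWS} with $\alpha=1$.
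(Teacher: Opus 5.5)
Your argument is essentially the paper's. The paper also dominates the logarithm in $N_0$ by a negative power (taking $\alpha=1$), and it splits the double-layer kernel into $B_1,B_2,B_3$, where the only piece needing \eqref{inq0} is $B_2\propto(\mathbf{x}-\mathbf{y},\mathbf{n}(\mathbf{y}))/(r^{(a,b)})^2$; your single bound $|N_1(z)|\le C/z$ handles the same point in one step.

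Your corner caveat is correct and more careful than the paper, which asserts a uniform constant in \eqref{inq0} by compactness. For example, $\mathbf{x}=(\epsilon,0)$ and $\mathbf{y}=(0,\epsilon)$ give $|(\mathbf{n}(\mathbf{y}),\mathbf{x}-\mathbf{y})|/|\mathbf{x}-\mathbf{y}|^2=1/(2\epsilon)$. So only the local version you state holds: boundedness near regular points, with a uniform $O(|\mathbf{x}-\mathbf{y}|^{-1})$ bound near corners.
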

\begin{proof}
In the first place, observe that
\begin{align*}
\Phi_{\lambda}^{(a,b)}(\mathbf{x}, \mathbf{y}) = S_{\lambda}^{(a,b)}(\mathbf{x}, \mathbf{y}) + C_{\lambda}^{(a,b)}(\mathbf{x}, \mathbf{y}),
\end{align*}
where
\begin{align*}
S_{\lambda}^{(a,b)}(\mathbf{x}, \mathbf{y}) &= \frac{2}{\pi}J_0\left(\lambda r^{(a,b)}(\mathbf{x}, \mathbf{y})\right)\ln\left(\lambda \frac{r^{(a,b)}(\mathbf{x}, \mathbf{y})}{2}\right),\\
C^{(a,b)}(\mathbf{x}, \mathbf{y}) &= \frac{2\gamma}{\pi}J_0\left(\lambda r^{(a,b)}(\mathbf{x}, \mathbf{y})\right) - \frac{2}{\pi}\sum_{n = 0}^{\infty}\frac{(-1)^n}{(n^2)!}\left(\frac{\lambda r^{(a,b)}(\mathbf{x}, \mathbf{y})}{2}\right)^{2n}\sum_{j = 1}^{n}\frac{1}{j}.
\end{align*}
Here, \( S_{\lambda}^{(a,b)}(\mathbf{x}, \mathbf{y}) \) is the singular component of \( \Phi_{\lambda}^{(a,b)} \), and its weak singularity will be established below.

To that end, recall that
\[
\lim_{x \to 0^+} x\ln x = 0,
\]
which implies the existence of a constant \( \delta > 0 \) such that \( |\ln x| < x^{-1} \) for all \( 0 < x \leq \delta \). Consequently, for all \( \mathbf{x}, \mathbf{y} \in \partial \Omega \) with \( \mathbf{x} \neq \mathbf{y} \) and \( r^{(a,b)}(\mathbf{x}, \mathbf{y}) \leq 2\delta/\lambda \), we have
\begin{align*}
\left|\ln\left(\lambda \frac{r^{(a,b)}(\mathbf{x}, \mathbf{y})}{2}\right)\right| &\leq \frac{2}{\lambda r^{(a,b)}(\mathbf{x}, \mathbf{y})} \leq \frac{2}{\lambda} \max\left\{\frac{1}{\sqrt{a}}, \frac{1}{\sqrt{b}}\right\} \cdot \frac{1}{|\mathbf{x} - \mathbf{y}|}.
\end{align*}
This implies that \( \ln\left(\lambda \frac{r^{(a,b)}(\mathbf{x}, \mathbf{y})}{2}\right) \) is weakly singular on \( \partial \Omega \) with exponent \( \alpha = 1 \). Since \( J_0 \) is bounded on compact sets, the same property is inherited by \( S_{\lambda}^{(a,b)} \). Finally, the term \( C^{(a,b)} \) is a smooth function of \( r^{(a,b)} \), hence regular on \( \partial \Omega \). This proves the weak singularity of \( \Phi_{\lambda}^{(a,b)} \).

On the other hand, we know that $J_0(x)$ is a $C^{\infty}$ function in $\mathbb{R}$, and thus it is bounded if $\frac{\lambda}{2}r^{(a,b)}(\mathbf{x}, \mathbf{y}) \leq \delta$. We conclude that $S_{\lambda}^{(a,b)}(\mathbf{x}, \mathbf{y})$ is weakly singular on $\partial \Omega$ with $\alpha = 1$.

With respect to $\nabla^{(a,b)}_{\mathbf{y}}\Phi_{\lambda}^{(a,b)}(\mathbf{x}, \mathbf{y})\cdot \mathbf{n}(\mathbf{y})$, we have 
\begin{align*}
\nabla^{(a,b)}_{\mathbf{y}}\Phi_{\lambda}^{(a,b)}(\mathbf{x}, \mathbf{y})\cdot \mathbf{n}(\mathbf{y}) = B^{(a,b)}_1(\mathbf{x}, \mathbf{y}) + B^{(a,b)}_2(\mathbf{x}, \mathbf{y}) + B^{(a,b)}_3(\mathbf{x}, \mathbf{y}),
\end{align*}
where
\begin{align*}
B^{(a,b)}_1(\mathbf{x}, \mathbf{y}) = \frac{1}{2\sqrt{ab}\pi}\frac{\lambda\left(\mathbf{x} - \mathbf{y}, \mathbf{n}(\mathbf{y})\right)}{r^{(a,b)}(\mathbf{x}, \mathbf{y})}J_1(\lambda r^{(a,b)}(\mathbf{x}, \mathbf{y}))\ln \left(\frac{\lambda r^{(a,b)}(\mathbf{x}, \mathbf{y})}{2}\right),
\end{align*}
\begin{align*}
B^{(a,b)}_2(\mathbf{x}, \mathbf{y}) = -\frac{1}{2\sqrt{ab}\pi}\frac{\lambda\left(\mathbf{x} - \mathbf{y}, \mathbf{n}(\mathbf{y})\right)}{\left(r^{(a,b)}(\mathbf{x}, \mathbf{y})\right)^2},
\end{align*}
and 
\begin{align*}
B^{(a,b)}_3(\mathbf{x}, \mathbf{y}) = \frac{1}{2\sqrt{ab}\pi}\frac{\lambda\left(\mathbf{x} - \mathbf{y}, \mathbf{n}(\mathbf{y})\right)}{r^{(a,b)}(\mathbf{x}, \mathbf{y})}\left(\gamma J_1(\lambda r^{(a,b)}(\mathbf{x}, \mathbf{y})) - \frac{1}{2}C_1(\lambda r^{(a,b)}(\mathbf{x}, \mathbf{y}))\right),
\end{align*}
with 
\begin{align*}
C_1(x) = \sum^{\infty}_{n = 0}(-1)^{n}\frac{\left(x/2\right)^{2n+1}}{n!(n+1)!}\left(\sum^{n}_{k = 1}\frac{1}{k} + \frac{1}{2(n+1)}\right).
\end{align*}
Note that the function \( C_1 \) is the limit of continuous functions, and is therefore itself continuous. Consequently, \( C_1 \) is bounded on bounded sets.

Taking into account the previous considerations and Proposition~\ref{prop1}, we conclude that the function \( \nabla^{(a,b)}_{\mathbf{y}}\Phi^{(a,b)}(\mathbf{x}, \mathbf{y})\cdot \mathbf{n}(\mathbf{y}) \) is bounded on \( \partial \Omega \cap \left\{\mathbf{x} \neq \mathbf{y}\right\} \). 
\end{proof}

Let us state the following theorem, which enables us to define the potentials \eqref{SP} and \eqref{DP} on the boundary $\partial \Omega$.

\begin{theorem}\label{theo2}
Let $K(\mathbf{x}, \mathbf{y})$ be weakly singular on $\partial \Omega$. Then, for all $\varphi \in C(\partial \Omega)$, the integral
\begin{align*}
\mathcal{A}(\varphi)(\mathbf{x}) := \int_{\partial \Omega} K(\mathbf{x}, \mathbf{y}) \varphi(\mathbf{y})\, ds(\mathbf{y}),
\end{align*}
is finite as an improper integral for each $\mathbf{x} \in \partial \Omega$. Moreover, $\mathcal{A}(\varphi)(\mathbf{x})$ defines a continuous function on $\partial \Omega$, and the operator $\mathcal{A}: C(\partial \Omega) \rightarrow C(\partial \Omega)$ is compact.
\end{theorem}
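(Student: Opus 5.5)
The plan is to follow the classical argument for weakly singular operators on a curve, as in Colton--Kress. The boundary $\partial\Omega$ is a compact, piecewise $C^2$ curve of finite length. The weak-singularity bound $|K(\mathbf{x},\mathbf{y})|\le L|\mathbf{x}-\mathbf{y}|^{\alpha-2}$ is only guaranteed locally, in a neighborhood $V_{\mathbf{x}}$ with constants that may depend on $\mathbf{x}$. As a first step I will use compactness of $\partial\Omega$: cover it by finitely many such neighborhoods and take the minimal $\alpha$ and maximal $L$. Combined with the continuity of $K$ off the diagonal, this should give a uniform estimate $|K(\mathbf{x},\mathbf{y})|\le M|\mathbf{x}-\mathbf{y}|^{\alpha-2}$ for all $\mathbf{x}\neq\mathbf{y}$ on $\partial\Omega$.

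Next I need a geometric estimate for arclength. Let $B(\mathbf{x},\rho)$ denote the ball of radius $\rho$ about $\mathbf{x}$. I want $\int_{\partial\Omega\cap B(\mathbf{x},\rho)}|\mathbf{x}-\mathbf{y}|^{\alpha-2}\,ds(\mathbf{y})\le C\rho^{\alpha-1}$ when $\alpha>1$, and a logarithmic analogue when $\alpha=1$ (the case that actually arises in Theorem~\ref{theo1}). This follows from local parametrisation: on each $C^2$ arc, $|\mathbf{y}(t)-\mathbf{y}(t_0)|\ge c|t-t_0|$, as in the proof of Proposition~\ref{prop1}, together with finitely many arcs meeting at the corners $\mathbf{z}_k$. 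Some care is needed here. The exponent $\alpha-2$ is integrable in one dimension only when $\alpha>1$, so for $\alpha\le1$ the argument must use the actual logarithmic bound for $\Phi^{(a,b)}_\lambda$ instead of the cruder estimate in the proof of Theorem~\ref{theo1}. I will therefore work with a bound $|K|\le M|\mathbf{x}-\mathbf{y}|^{\beta-1}$ with $\beta>0$, which is integrable on a curve, and I will note this as the correct one-dimensional reading of the definition.

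With this in hand, existence of the improper integral is immediate: $|\varphi K|\le\|\varphi\|_\infty M|\mathbf{x}-\mathbf{y}|^{\beta-1}$, which is integrable. For continuity and compactness I will use the standard cut-off. Let $h\in C([0,\infty))$ with $h=0$ on $[0,1/2]$, $h=1$ on $[1,\infty)$, and $0\le h\le1$, and set $K_n(\mathbf{x},\mathbf{y})=h(n|\mathbf{x}-\mathbf{y}|)K(\mathbf{x},\mathbf{y})$. Each $K_n$ is continuous on $\partial\Omega\times\partial\Omega$, so the corresponding operator $\mathcal{A}_n$ maps $C(\partial\Omega)$ into itself. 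By Arzelà--Ascoli, using uniform continuity of $K_n$ on a compact set, $\mathcal{A}_n$ is compact. Moreover,
\[
|\mathcal{A}\varphi(\mathbf{x})-\mathcal{A}_n\varphi(\mathbf{x})|\le\|\varphi\|_\infty\int_{\partial\Omega\cap B(\mathbf{x},1/n)}M|\mathbf{x}-\mathbf{y}|^{\beta-1}\,ds(\mathbf{y})\le C\|\varphi\|_\infty n^{-\beta},
\]
uniformly in $\mathbf{x}$. Hence $\mathcal{A}_n\varphi\to\mathcal{A}\varphi$ uniformly, so $\mathcal{A}\varphi$ is continuous. Also $\|\mathcal{A}-\mathcal{A}_n\|\to0$ in operator norm, and since compact operators form a closed subspace, $\mathcal{A}$ is compact.

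The main obstacle is the uniform arclength estimate near the corner points $\mathbf{z}_k$. There the chord--arc comparison must be verified separately on each adjacent arc. For the rectangular geometry considered here, where all arcs are straight segments meeting at right angles, this is elementary: every ball meets at most two segments. In general one needs the constant $c$ from the parametrisation bound uniformly over the finitely many arcs. The other delicate point is the exponent convention discussed above; I will state explicitly that the bound used is integrable on $\partial\Omega$.
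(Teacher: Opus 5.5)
\textbf{Gap: the uniform bound in your first step.} You derive a uniform bound from the paper's definition by compactness, and that step fails. The definition controls $K(\mathbf{x},\cdot)$ only for the single point $\mathbf{x}$ to which $V_{\mathbf{x}}$ is attached. A finite subcover therefore gives estimates at finitely many centres $\mathbf{x}_k$, and nothing about $K(\mathbf{x}',\mathbf{y})$ for other $\mathbf{x}'\in V_{\mathbf{x}_k}$. Continuity off the diagonal does not help when $\mathbf{x}'$ and $\mathbf{y}$ approach the diagonal together.

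No argument can close this, because with the pointwise definition the conclusion is false. In arclength coordinates $s,t$ of $\mathbf{x},\mathbf{y}$ near a regular point, let $K=\chi(s)\chi(t)\,|s|/(s^2+t^2)$, with a continuous cut-off $0\le\chi\le 1$, $\chi(0)=1$, and $K=0$ elsewhere. This $K$ is continuous off the diagonal. For each fixed $\mathbf{x}$ it is bounded in $\mathbf{y}$ (by $1/|s|$, and identically zero when $s=0$), so it is weakly singular in the paper's sense with $\alpha=2$. Yet $\mathcal{A}(1)(\mathbf{x})\to\pi$ as $s\to0$, while $\mathcal{A}(1)=0$ at $s=0$.

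The double-layer kernel at a corner of the rectangle behaves the same way. For $\mathbf{x}=(s,0)$ and $\mathbf{y}=(0,t)$ it is, up to a constant and a harmless remainder, $s/(s^2/a+t^2/b)$. This is bounded for each $s>0$ but not uniformly. Its integral over the side $\{y_1=0\}$ tends to a nonzero limit as $s\to0^+$, although it vanishes at $s=0$.

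So uniformity has to be assumed. It suffices to require the bound for all pairs $\mathbf{x}',\mathbf{y}\in V_{\mathbf{x}}\cap\partial\Omega$. Then a Lebesgue number $\delta$ of the subcover, together with boundedness of $K$ on the compact set $\{|\mathbf{x}-\mathbf{y}|\ge\delta\}$, gives the global estimate $|K(\mathbf{x},\mathbf{y})|\le M|\mathbf{x}-\mathbf{y}|^{\beta-1}$. From there your argument is correct: the chord--arc estimate on finitely many arcs, the cut-off kernels $K_n$, Arzel\`a--Ascoli, and closedness of the compact operators in operator norm.

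\textbf{Comparison with the paper's proof.} Your correction of the exponent is right, and it locates the error in the paper's own proof. The paper fixes $\mathbf{x}$, splits off $S_{\mathbf{x},R}$ and bounds the far part by continuity. For the near part it projects $\mathbf{y}$ onto the tangent line at $\mathbf{x}$ and asserts $|\mathbf{y}_r-\mathbf{x}|\le C|\mathbf{x}-\mathbf{y}|^2$. That is false: on a straight side $\mathbf{y}_r=\mathbf{y}$, and in general $|\mathbf{y}_r-\mathbf{x}|$ is comparable to $|\mathbf{x}-\mathbf{y}|$. This false step is exactly what lets the paper integrate $|\mathbf{x}-\mathbf{y}|^{\alpha-2}$ for every $\alpha\in(0,2]$.

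Taking $K=|\mathbf{x}-\mathbf{y}|^{\alpha-2}$ and $\varphi\equiv1$ shows the statement fails for $\alpha\in(0,1]$. This includes the $\alpha=1$ bound that the proof of Theorem~\ref{theo1} produces for $S^{(a,b)}_\lambda$. So relying on the genuine logarithmic bound, which is $O(|\mathbf{x}-\mathbf{y}|^{\beta-1})$ for every $\beta<1$, is necessary. You should drop your ``logarithmic analogue'' for $\alpha=1$ in the paper's convention, since $\int_0^{\rho}t^{-1}\,dt$ diverges.

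Finally, the paper's proof ends with pointwise finiteness and never addresses continuity of $\mathcal{A}(\varphi)$ or compactness of $\mathcal{A}$. Your cut-off approximation is the standard Colton--Kress route, and it is what actually delivers those two conclusions.
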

\begin{proof}
Let $\mathbf{x} \in \partial \Omega$, and let $V_{\mathbf{x}}$ be a neighborhood of $\mathbf{x}$ such that the condition \eqref{condWS} holds for all $\mathbf{y} \in V_{\mathbf{x}} \cap \partial \Omega$. Furthermore, let $R > 0$ be a constant such that the set
\[
S_{\mathbf{x},R} := \left\{\mathbf{y} \in \partial \Omega \mid \left|\mathbf{x} - \mathbf{y}\right| < R\right\}
\]
is contained in $V_{\mathbf{x}} \cap \partial \Omega$.

We can rewrite the integral $\mathcal{A}(\varphi)(\mathbf{x})$ as
\[
\mathcal{A}(\varphi)(\mathbf{x})
  = \int_{\partial \Omega \setminus S_{\mathbf{x},R}} K(\mathbf{x}, \mathbf{y})\, \varphi(\mathbf{y})\, ds(\mathbf{y})
  + \int_{S_{\mathbf{x},R}} K(\mathbf{x}, \mathbf{y})\, \varphi(\mathbf{y})\, ds(\mathbf{y}).
\]
First, since 
\(
(\partial \Omega \setminus S_{\mathbf{x},R}) \times (\partial \Omega \setminus S_{\mathbf{x},R})
\) 
is compact and $K(\mathbf{x},\mathbf{y})$ is continuous on this set, it is bounded. Hence
\[
\left|\int_{\partial \Omega \setminus S_{\mathbf{x},R}} K(\mathbf{x}, \mathbf{y})\, \varphi(\mathbf{y})\, ds(\mathbf{y})\right| < \infty.
\]

Now, let us estimate the integral over \( S_{\mathbf{x},R} \). For this purpose, define \( \mathbf{y}_r \) as the orthogonal projection of \( \mathbf{y} \in S_{\mathbf{x},R} \) onto the tangent line to $S_{\mathbf{x},R}$ at the point $\mathbf{x}$.

Again, we have 
\begin{align*}
\left|\int_{S_{\mathbf{x},R}}K(\mathbf{x}, \mathbf{y})\varphi(\mathbf{y})ds(\mathbf{y})\right| \leq \|\varphi\|_{\infty}\int_{S_{\mathbf{x},R}}\left|\mathbf{x} - \mathbf{y}\right|^{\alpha-2}ds(\mathbf{y}).
\end{align*}
On the other hand, if \( \mathbf{y}_r \) denotes the aforementioned projection, then
\begin{align*}
\mathbf{y}_r - \mathbf{x} = \kappa\, \mathbf{t}(\mathbf{x}),
\end{align*}
where \( \mathbf{t}(\mathbf{x}) \) is the unit tangent vector to \( \partial \Omega \) at \( \mathbf{x} \), and \( \kappa \in \mathbb{R} \) is the scalar parameter that determines the position of \( \mathbf{y}_r \) along the tangent line. In the specific case of the projection \( \mathbf{y}_r \), this parameter is given by
\begin{align*}
\kappa = \frac{(\mathbf{x} - \mathbf{y}) \cdot \mathbf{n}(\mathbf{y})}{1 - \mathbf{n}(\mathbf{y}) \cdot \mathbf{t}(\mathbf{x})}.
\end{align*}
Due to the above, we obtain
\begin{align*}
\left|\mathbf{y}_r - \mathbf{x}\right| 
&= |\kappa| 
= \left| \frac{(\mathbf{x} - \mathbf{y}) \cdot \mathbf{n}(\mathbf{y})}{1 - \mathbf{n}(\mathbf{y}) \cdot \mathbf{t}(\mathbf{x})} \right| \\
&\leq \frac{\left|(\mathbf{x} - \mathbf{y}) \cdot \mathbf{n}(\mathbf{y})\right|}{1 - \mathbf{n}(\mathbf{y}) \cdot \mathbf{t}(\mathbf{x})}
\leq \frac{L\,\left|\mathbf{x} - \mathbf{y}\right|^2}{1 - \mathbf{n}(\mathbf{y}) \cdot \mathbf{t}(\mathbf{x})},
\end{align*}
where we used Proposition~\ref{prop1} to bound the numerator. We assume that there exists a constant \( 0 < l < 1 \) such that \( \mathbf{n}(\mathbf{y}) \cdot \mathbf{t}(\mathbf{x}) < l \) for all \( \mathbf{y} \in S_{\mathbf{x}, R} \). Hence, we finally obtain
\begin{align*}
\left|\mathbf{y}_r - \mathbf{x}\right| \leq \frac{L}{1 - l} \left|\mathbf{x} - \mathbf{y}\right|^2.
\end{align*}

Finally, we have obtained that 
\begin{align*}
\left|\mathbf{x} - \mathbf{y}\right| \geq C \left|\mathbf{x} - \mathbf{y}_r\right|^{1/2},
\end{align*}
thus,
\begin{align*}
\left|\int_{S_{\mathbf{x},R}}K(\mathbf{x}, \mathbf{y})\varphi(\mathbf{y})ds(\mathbf{y})\right| &\leq \|\varphi\|_{\infty}\int_{S_{\mathbf{x},R}}\left|\mathbf{x} - \mathbf{y}\right|^{\alpha-2}ds(\mathbf{y})
\leq \|\varphi\|_{\infty}\int_{S_{\mathbf{x},R}}\left|\mathbf{x} - \mathbf{y}_r\right|^{\alpha/2 - 1}ds(\mathbf{y}_r) \\
&\leq  2\|\varphi\|_{\infty}\int^R_{0}\kappa^{\alpha/2 - 1}d\kappa = \frac{4\|\varphi\|_{\infty}}{\alpha} R^{\alpha/2} < \infty.
\end{align*}
\end{proof}
\begin{remark}
As a direct consequence of Theorems~\ref{theo1} and~\ref{theo2}, we conclude that the single- and double-layer potentials, denoted by $\mathcal{K}_{\lambda}^{(a,b)}[\varphi](\mathbf{x})$ and $\mathcal{W}_{\lambda}^{(a,b)}[\varphi](\mathbf{x})$ respectively, as defined in formulas~\eqref{SP} and~\eqref{DP}, are well-defined for every point $\mathbf{x} \in \partial \Omega$, provided the integrals are understood in the improper sense. Moreover, the associated operators
\begin{align*}
&\mathcal{K}_{\lambda}^{(a,b)}: C(\partial \Omega) \rightarrow C(\partial \Omega), \\
&\mathcal{W}_{\lambda}^{(a,b)}: C(\partial \Omega) \rightarrow C(\partial \Omega),
\end{align*}
are compact.
\end{remark}
\begin{theorem}\label{theo3}
Let $D$ be a closed domain containing $\partial \Omega$. Consider $K(\mathbf{x}, \mathbf{y})$ a function defined and continuous for all $\mathbf{x} \in D$ and $\mathbf{y} \in \partial \Omega$, with $\mathbf{x} \neq \mathbf{y}$. Assume that $K$ satisfies the following conditions:
\begin{enumerate}
\item For every $\mathbf{x} \in D$, there exists a neighborhood $V_{\mathbf{x}} \subset D$, a constant $L > 0$ independent of $\mathbf{x}$, and a real number $\alpha \in (0,2]$ such that
\begin{align*}
\left|K(\mathbf{x},\mathbf{y})\right| \leq L \left|\mathbf{x} - \mathbf{y}\right|^{\alpha - 2}, \quad \text{for all } \mathbf{y} \in V_{\mathbf{x}} \cap \partial \Omega.
\end{align*}

\item There exists an integer $m \in \mathbb{N}$ such that
\begin{align*}
\left|K(\mathbf{x}_1,\mathbf{y}) - K(\mathbf{x}_2,\mathbf{y})\right| \leq L \sum_{j = 1}^{m} \left|\mathbf{x}_1 - \mathbf{y}\right|^{\alpha - 2 - j} \left|\mathbf{x}_1 - \mathbf{x}_2\right|^{j},
\end{align*}
for all $\mathbf{x}_1, \mathbf{x}_2 \in V_{\mathbf{x}}$ and $\mathbf{y} \in V_{\mathbf{x}} \cap \partial \Omega$, provided that $2\left|\mathbf{x}_1 - \mathbf{x}_2\right| \leq \left|\mathbf{x}_1 - \mathbf{y}\right|$.
\end{enumerate}

Then, for every $\varphi \in C(\partial \Omega)$, the potential
\begin{align*}
\mathcal{A}(\varphi)(\mathbf{x}) = \int_{\partial \Omega}K(\mathbf{x}, \mathbf{y})\varphi(\mathbf{y})ds(\mathbf{y}),\quad \mathbf{x} \in D,
\end{align*}
belongs to the Hölder space $C^{0,\beta}(D)$ for every
\begin{itemize}
\item $\beta \in (0, \alpha]$ if $0 < \alpha < 1$,
\item $\beta \in (0, 1)$ if $\alpha = 1$,
\item $\beta \in (0, 1]$ if $1 < \alpha < 2$.
\end{itemize}
Moreover, the following estimate holds:
\begin{align*}
\left\|\mathcal{A}(\varphi)\right\|_{C^{0,\beta}(D)} \leq C_{\beta}\left\|\varphi\right\|_{C(\partial \Omega)},
\end{align*}
where $C_\beta > 0$ is a constant depending only on $\beta$.
\end{theorem}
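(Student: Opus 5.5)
The plan is the classical splitting argument for weakly singular potentials (as in \cite{Colton2013}), carried out with the exponents exactly as in the statement: kernel bound $|\mathbf{x}-\mathbf{y}|^{\alpha-2}$ and difference bound $\sum_j |\mathbf{x}_1-\mathbf{y}|^{\alpha-2-j}|\mathbf{x}_1-\mathbf{x}_2|^j$. Everything rests on one local growth estimate for integrals over $\partial\Omega$, which I isolate first. For $\mathbf{x}\in D$ and small $\rho>0$ I aim to show
\begin{align*}
\int_{\partial\Omega\cap\{|\mathbf{x}-\mathbf{y}|<\rho\}} |\mathbf{x}-\mathbf{y}|^{\alpha-2}\,ds(\mathbf{y}) \le C\rho^{\alpha},
\end{align*}
and, for $\gamma<0$ and a fixed $R$,
\begin{align*}
\int_{\partial\Omega\cap\{\rho\le|\mathbf{x}-\mathbf{y}|\le R\}} |\mathbf{x}-\mathbf{y}|^{\gamma-2}\,ds(\mathbf{y}) \le
\begin{cases} C\rho^{\gamma}, & \gamma<0,\\ C(1+|\ln\rho|), & \gamma=0,\\ C, & \gamma>0 .\end{cases}
\end{align*}
Both bounds would follow from a dyadic decomposition into shells $2^{-k-1}R\le|\mathbf{x}-\mathbf{y}|<2^{-k}R$, provided the arclength of each shell is controlled by the power of $2^{-k}R$ that the proof of Theorem~\ref{theo2} implicitly uses. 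That proof compares $|\mathbf{x}-\mathbf{y}|$ with the tangential projection $|\mathbf{x}-\mathbf{y}_r|$ via Proposition~\ref{prop1}. If $\mathbf{x}\in D$ lies off $\partial\Omega$, I would first move to the nearest boundary point $\mathbf{x}^*$, using $|\mathbf{x}^*-\mathbf{y}|\le 2|\mathbf{x}-\mathbf{y}|$. I expect this growth estimate to be the \textbf{main obstacle}, since it is exactly where the geometry of the piecewise $C^2$ curve, including the corners $\mathbf{z}_k$, enters. At the corners I would treat each smooth arc separately, since there are only finitely many.

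Given the growth estimate, the sup bound is immediate. Cover $D$ by finitely many neighbourhoods $V_{\mathbf{x}}$ from hypothesis~1. On $V_{\mathbf{x}}\cap\partial\Omega$, use the first estimate; on the rest of $\partial\Omega$, use continuity of $K$ on the compact set where $|\mathbf{x}-\mathbf{y}|$ is bounded below. This gives $\|\mathcal{A}(\varphi)\|_\infty\le C\|\varphi\|_\infty$.

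For the Hölder seminorm, take $\mathbf{x}_1,\mathbf{x}_2$ in a common $V_{\mathbf{x}}$ and set $\rho=2|\mathbf{x}_1-\mathbf{x}_2|$. Pairs that are not in a common neighbourhood satisfy $|\mathbf{x}_1-\mathbf{x}_2|\ge\delta$ (a Lebesgue number of the cover), so for them $|\mathcal{A}\varphi(\mathbf{x}_1)-\mathcal{A}\varphi(\mathbf{x}_2)|\le 2\|\mathcal{A}\varphi\|_\infty\le C\delta^{-\beta}|\mathbf{x}_1-\mathbf{x}_2|^\beta$. For the close pairs, split $\partial\Omega$ into the near part $\{|\mathbf{x}_1-\mathbf{y}|<\rho\}$ and the far part. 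On the near part, estimate each term of the difference separately with hypothesis~1. The $\mathbf{x}_2$-term lives on $\{|\mathbf{x}_2-\mathbf{y}|<3\rho/2\}$, so the first growth estimate gives $C\rho^\alpha$ for both. On the far part we have $2|\mathbf{x}_1-\mathbf{x}_2|\le|\mathbf{x}_1-\mathbf{y}|$, so hypothesis~2 applies, and the second growth estimate with $\gamma=\alpha-j$ gives
\begin{align*}
\sum_{j=1}^m \rho^{j}\int_{\rho\le|\mathbf{x}_1-\mathbf{y}|}|\mathbf{x}_1-\mathbf{y}|^{\alpha-2-j}ds \le C\sum_{j=1}^m \rho^j\bigl(\rho^{\alpha-j}\ \text{or}\ 1+|\ln\rho|\ \text{or}\ 1\bigr).
\end{align*}
The portion of $\partial\Omega$ outside $V_{\mathbf{x}}$ stays at positive distance from $\mathbf{x}_1$ and $\mathbf{x}_2$. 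There I would use the smoothness of the explicit kernels, or equivalently assume hypothesis~2 holds there with $|\mathbf{x}_1-\mathbf{y}|$ bounded below, to get a contribution $\le C\rho$.

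Finally I collect the powers of $\rho$; the $j=1$ term dominates. If $0<\alpha<1$, everything is $O(\rho^\alpha)$, hence $O(\rho^\beta)$ for $\beta\le\alpha$. If $\alpha=1$, the $j=1$ term is $\rho(1+|\ln\rho|)\le C_\beta\rho^\beta$ for every $\beta<1$, which is why $\beta=1$ is excluded. If $1<\alpha<2$, all terms are $O(\rho)$, and $\rho^\alpha\le C\rho$, giving $\beta\le1$. Multiplying by $\|\varphi\|_\infty$ and adding the sup bound yields $\|\mathcal{A}(\varphi)\|_{C^{0,\beta}(D)}\le C_\beta\|\varphi\|_{C(\partial\Omega)}$.
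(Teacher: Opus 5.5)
Your splitting argument is the route the paper points to (Colton--Kress, Theorem~2.7). That theorem, however, is formulated for surfaces in $\mathbb{R}^3$, where $ds$ is area measure, and your two growth estimates are the surface estimates. On a curve in $\mathbb{R}^2$ they are false, and this is exactly the step you flagged as the main obstacle. For $\mathbf{x}$ on a $C^2$ arc, and likewise near each of the finitely many corners, the dyadic shell $\{2^{-k-1}R\le|\mathbf{x}-\mathbf{y}|<2^{-k}R\}$ has arclength comparable to $2^{-k}R$, not $(2^{-k}R)^2$. Hence
\begin{align*}
\int_{\partial\Omega\cap\{|\mathbf{x}-\mathbf{y}|<\rho\}}|\mathbf{x}-\mathbf{y}|^{\alpha-2}\,ds(\mathbf{y})\asymp\int_0^{\rho}r^{\alpha-2}\,dr=
\begin{cases}
+\infty, & 0<\alpha\le 1,\\
\rho^{\alpha-1}/(\alpha-1), & 1<\alpha\le 2.
\end{cases}
\end{align*}
The far-field integral of $|\mathbf{x}-\mathbf{y}|^{\gamma-2}$ behaves like $\rho^{\gamma-1}$, $|\ln\rho|$, or $1$ according as $\gamma<1$, $\gamma=1$, $\gamma>1$. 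Every exponent in your estimates is therefore shifted down by one. Leaning on the proof of Theorem~\ref{theo2} does not help, because the bound $|\mathbf{y}_r-\mathbf{x}|\le C|\mathbf{x}-\mathbf{y}|^2$ used there is incorrect. The tangential projection satisfies $|\mathbf{y}_r-\mathbf{x}|=|(\mathbf{y}-\mathbf{x})\cdot\mathbf{t}(\mathbf{x})|\ge c|\mathbf{x}-\mathbf{y}|$ near $\mathbf{x}$; only the normal component is quadratic, which is all Proposition~\ref{prop1} gives.

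No argument can close this gap, because the statement with these exponents is false in two dimensions. Take $K(\mathbf{x},\mathbf{y})=|\mathbf{x}-\mathbf{y}|^{\alpha-2}$ with $\alpha<2$. Hypothesis~1 holds trivially. Hypothesis~2 holds with $m=1$ by the mean value theorem, since every point $\mathbf{w}$ of the segment $[\mathbf{x}_1,\mathbf{x}_2]$ satisfies $|\mathbf{w}-\mathbf{y}|\ge\frac12|\mathbf{x}_1-\mathbf{y}|$.
\begin{itemize}
\item For $\alpha\le1$ and $\varphi\equiv1$, you get $\mathcal{A}(\varphi)=+\infty$ at every point of $\partial\Omega$.
\item For $1<\alpha<2$, take $\mathbf{z}$ on a straight side of $\partial\Omega$. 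The substitution $s=hu$ gives $\mathcal{A}(1)(\mathbf{z})-\mathcal{A}(1)(\mathbf{z}+h\mathbf{n}(\mathbf{z}))\sim c_\alpha |h|^{\alpha-1}$, with $c_\alpha=\int_{\mathbb{R}}\bigl(|u|^{\alpha-2}-(1+u^2)^{(\alpha-2)/2}\bigr)\,du\in(0,\infty)$. So $\mathcal{A}(1)\notin C^{0,\beta}(D)$ for $\beta>\alpha-1$, contradicting the claimed range $\beta\in(0,1]$.
\end{itemize}

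The correct two-dimensional version shifts all exponents by one. It assumes $|K|\le L|\mathbf{x}-\mathbf{y}|^{\alpha-1}$ and the difference bound with $|\mathbf{x}_1-\mathbf{y}|^{\alpha-1-j}$, for $0<\alpha\le1$. It concludes $\beta\le\alpha$ for $\alpha<1$ and $\beta<1$ for $\alpha=1$. Your dyadic splitting proves that version essentially verbatim once the shell lengths are counted correctly.

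There is a separate issue: hypothesis~2 is imposed only for $\mathbf{y}\in V_{\mathbf{x}}$. Mere continuity of $K$ on the part of $\partial\Omega$ away from $\mathbf{x}$ gives a modulus of continuity but no Hölder bound. The smoothness you invoke there is therefore an additional hypothesis; Colton--Kress impose the difference estimate for all $\mathbf{y}\in\partial\Omega$.
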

The proof of this theorem follows from Theorem 2.7 in \cite{Colton2013}, with appropriate modifications to account for the fact that we are working in two spatial dimensions, and for the local nature of the definition of weakly singular kernels adopted in our context.

We now proceed to examine how the theorem above applies to the specific form of the fundamental solution considered in this work.
\begin{proposition}
Consider $S_{\lambda}^{(a,b)}(\mathbf{x}, \mathbf{y})$, the weakly singular component of the fundamental solution $\Phi_{\lambda}^{(a,b)}(\mathbf{x}, \mathbf{y})$. There exists a closed domain $D \subset \mathbb{R}^2$ containing $\partial \Omega$ such that $S_{\lambda}^{(a,b)}(\mathbf{x}, \mathbf{y})$ satisfies the conditions of Theorem~\ref{theo3} within $D$.
\end{proposition}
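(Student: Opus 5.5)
We verify the two conditions of Theorem~\ref{theo3} for
\[
S_{\lambda}^{(a,b)}(\mathbf{x},\mathbf{y}) = \tfrac{2}{\pi}J_0\big(\lambda r^{(a,b)}(\mathbf{x},\mathbf{y})\big)\ln\big(\tfrac{\lambda}{2} r^{(a,b)}(\mathbf{x},\mathbf{y})\big).
\]
We take $D$ to be a closed tubular neighbourhood $D=\{\mathbf{x}: \operatorname{dist}(\mathbf{x},\partial\Omega)\le \rho\}$, with $\rho$ chosen so that $\operatorname{diam}(D)\le 2\delta/\lambda\cdot\min\{\sqrt a,\sqrt b\}$ locally. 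More precisely, we cover $\partial\Omega$ by finitely many neighbourhoods $V_{\mathbf{x}}$ of diameter small enough that $\lambda r^{(a,b)}/2\le \delta$ whenever $\mathbf{x},\mathbf{y}\in V_{\mathbf{x}}$. The basic tool is the two-sided comparison
\[
c_1|\mathbf{x}-\mathbf{y}|\le r^{(a,b)}(\mathbf{x},\mathbf{y})\le c_2|\mathbf{x}-\mathbf{y}|,\qquad c_1=\min\{a^{-1/2},b^{-1/2}\},\quad c_2=\max\{a^{-1/2},b^{-1/2}\},
\]
which holds for all $\mathbf{x},\mathbf{y}\in\mathbb{R}^2$ and not only on $\partial\Omega$.

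\emph{Condition 1.} We repeat the argument of Theorem~\ref{theo1}, now with $\mathbf{x}\in D$. Since $|J_0|\le 1$ and $|\ln s|\le s^{-1}$ for $0<s\le\delta$, we get $|S_\lambda^{(a,b)}(\mathbf{x},\mathbf{y})|\le \tfrac{4}{\pi\lambda c_1}|\mathbf{x}-\mathbf{y}|^{-1}$, which is condition~1 with $\alpha=1$. The constant is independent of $\mathbf{x}$ because $c_1$ is. Alternatively, one can use the sharper bound $|\ln s|\le C_\varepsilon s^{-\varepsilon}$, which gives any $\alpha<2$; this is useful for obtaining a larger range of $\beta$, but $\alpha=1$ suffices for the statement.

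\emph{Condition 2.} This is the main step. Take $\mathbf{x}_1,\mathbf{x}_2\in V_{\mathbf{x}}$ with $2|\mathbf{x}_1-\mathbf{x}_2|\le|\mathbf{x}_1-\mathbf{y}|$, and write $r_k=r^{(a,b)}(\mathbf{x}_k,\mathbf{y})$. We plan to apply the mean value theorem to $g(s)=\tfrac{2}{\pi}J_0(\lambda s)\ln(\lambda s/2)$ on the segment between $r_1$ and $r_2$. Since $r^{(a,b)}(\cdot,\mathbf{y})$ is a norm of $\mathbf{x}-\mathbf{y}$, the triangle inequality gives $|r_1-r_2|\le r^{(a,b)}(\mathbf{x}_1,\mathbf{x}_2)\le c_2|\mathbf{x}_1-\mathbf{x}_2|$. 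The key point is that intermediate values stay away from zero. From $|\mathbf{x}_2-\mathbf{y}|\ge|\mathbf{x}_1-\mathbf{y}|-|\mathbf{x}_1-\mathbf{x}_2|\ge\tfrac12|\mathbf{x}_1-\mathbf{y}|$, every $s$ between $r_1$ and $r_2$ satisfies $s\ge \tfrac{c_1}{2}|\mathbf{x}_1-\mathbf{y}|$. The derivative is
\[
g'(s)=\tfrac{2}{\pi}\big(-\lambda J_1(\lambda s)\ln(\lambda s/2)+J_0(\lambda s)/s\big).
\]
The first term is bounded, because $J_1(t)\ln t\to0$ as $t\to0^+$ and $s$ ranges over a compact set. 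Hence $|g'(s)|\le C/s\le \tfrac{2C}{c_1}|\mathbf{x}_1-\mathbf{y}|^{-1}$.

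Combining these estimates gives
\[
|S(\mathbf{x}_1,\mathbf{y})-S(\mathbf{x}_2,\mathbf{y})|\le L|\mathbf{x}_1-\mathbf{y}|^{-1}|\mathbf{x}_1-\mathbf{x}_2|=L|\mathbf{x}_1-\mathbf{y}|^{\alpha-2-j}|\mathbf{x}_1-\mathbf{x}_2|^{j}
\]
with $\alpha=1$, $j=1$, and $m=1$. The exponent works out as $\alpha-2-j=1-2-1=-2$, which seems to demand a stronger singularity than we have. We reconcile this by using the bounded diameter: $|\mathbf{x}_1-\mathbf{y}|^{-1}\le \operatorname{diam}(V_{\mathbf{x}})\,|\mathbf{x}_1-\mathbf{y}|^{-2}$. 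Therefore condition~2 holds with $m=1$ after enlarging $L$.

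Finally, we take $L$ to be the maximum of the constants over the finite cover, using the compactness of $\partial\Omega$. The corners $\mathbf{z}_1,\ldots,\mathbf{z}_l$ cause no difficulty, since neither estimate uses normals or tangents; only the norm comparison enters. This yields the required closed domain $D$.
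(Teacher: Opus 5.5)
Your proof is correct, but it reaches condition~2 by a different route from the paper.

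\emph{The paper's route.} The paper shrinks neighbourhoods $V_{\mathbf{z}}$ until four things hold: $|(\mathbf{x}_1-\mathbf{y})(\mathbf{x}_2-\mathbf{y})\ln(\tfrac{\lambda}{2}r^{(a,b)})|\le M_{\mathbf{z}}$; both logarithms are negative; $J_0(\lambda r_1)J_0(\lambda r_2)>0$; and $J_0(\lambda r^{(a,b)}(\cdot,\mathbf{y}))$ is Lipschitz. It then bounds $|S_{\lambda}^{(a,b)}(\mathbf{x}_1,\mathbf{y})-S_{\lambda}^{(a,b)}(\mathbf{x}_2,\mathbf{y})|$ by $M_{\mathbf{z}}\,\bigl||J_0(\lambda r_1)|-|J_0(\lambda r_2)|\bigr|\,|\mathbf{x}_1-\mathbf{y}|^{-2}$.

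\emph{Your route.} You apply the mean value theorem to the radial profile $g$. You use that $r^{(a,b)}$ is a norm to get $|r_1-r_2|\le c_2|\mathbf{x}_1-\mathbf{x}_2|$. You use the separation hypothesis to keep intermediate values above $\tfrac{c_1}{2}|\mathbf{x}_1-\mathbf{y}|$.

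\emph{What each buys.} The paper's version is shorter. Yours is the one that actually controls the variation of the logarithm, i.e.\ the term $J_0(\lambda s)/s$ in $g'$, which is the dominant contribution. The paper's chain replaces a difference of products $\ln(\cdot)J_0(\cdot)$ by $M_{\mathbf{z}}$ times the difference of the $J_0$ factors alone. It also never invokes $2|\mathbf{x}_1-\mathbf{x}_2|\le|\mathbf{x}_1-\mathbf{y}|$, and without that hypothesis the estimate is false: fix $\mathbf{x}_1$ and let $\mathbf{x}_2\to\mathbf{y}$. Your route gives the sharper bound $|\mathbf{x}_1-\mathbf{y}|^{-1}|\mathbf{x}_1-\mathbf{x}_2|$, with constants depending only on $a,b,\lambda$ and the neighbourhood size, and it needs no sign conditions. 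Your conversion to the exponent $\alpha-2-j=-2$ via the bounded diameter is fine.

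\emph{Two cosmetic points.} First, $D$ itself cannot have small diameter, so the smallness condition belongs on the neighbourhoods. Second, Theorem~\ref{theo3} asks for a neighbourhood of every $\mathbf{x}\in D$, not a finite cover of $\partial\Omega$. Taking $V_{\mathbf{x}}=\overline{B}(\mathbf{x},\eta)\cap D$, with $\eta$ small enough that $\tfrac{\lambda}{2}r^{(a,b)}\le\delta$ on $V_{\mathbf{x}}\times V_{\mathbf{x}}$, does this. Since your constants are already uniform, the compactness step is unnecessary.
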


\begin{proof}
Let $\mathbf{z} \in \partial \Omega$ and let $V_{\mathbf{z}} \subset \mathbb{R}^2$ be a neighborhood of $\mathbf{z}$. It is possible to choose $V_{\mathbf{z}}$ such that the following conditions hold for all $\mathbf{x}_1, \mathbf{x}_2 \in V_{\mathbf{z}}$ and $\mathbf{y} \in V_{\mathbf{z}} \cup \partial \Omega$:
\begin{align*}
&\left|(\mathbf{x}_1 - \mathbf{y})(\mathbf{x}_2 - \mathbf{y})\ln \left(\frac{\lambda}{2}r^{(a,b)}(\mathbf{x}_1, \mathbf{y})\right)\right| \leq M_{\mathbf{z}},\\
&\left|(\mathbf{x}_1 - \mathbf{y})(\mathbf{x}_2 - \mathbf{y})\ln \left(\frac{\lambda}{2}r^{(a,b)}(\mathbf{x}_2, \mathbf{y})\right)\right| \leq M_{\mathbf{z}},\\
&\ln \left(\dfrac{\lambda}{2}r^{(a,b)}(\mathbf{x}_1, \mathbf{y})\right) < 0,\quad 
\ln \left(\dfrac{\lambda}{2}r^{(a,b)}(\mathbf{x}_2, \mathbf{y})\right) < 0,\\
&J_0\left(\lambda r^{(a,b)}(\mathbf{x}_1, \mathbf{y})\right)J_0\left(\lambda r^{(a,b)}(\mathbf{x}_2, \mathbf{y})\right) > 0,\\
&\left|J_0\left(\lambda r^{(a,b)}(\mathbf{x}_1, \mathbf{y})\right) - J_0\left(\lambda r^{(a,b)}(\mathbf{x}_2, \mathbf{y})\right)\right| \leq L_{\mathbf{z}}\left|\mathbf{x}_1 - \mathbf{x}_2\right|.
\end{align*}

Under these assumptions, we estimate the difference:
{\footnotesize
\begin{align*}
&\left|S_{\lambda}^{(a,b)}(\mathbf{x}_1, \mathbf{y}) - S_{\lambda}^{(a,b)}(\mathbf{x}_2, \mathbf{y})\right| \\
&\leq \frac{\big|\left|(\mathbf{x}_1 - \mathbf{y})(\mathbf{x}_2 - \mathbf{y})\ln \left(\frac{\lambda}{2}r^{(a,b)}(\mathbf{x}_1, \mathbf{y})\right)J_0\left(\lambda r^{(a,b)}(\mathbf{x}_1, \mathbf{y})\right)\right| - \left|(\mathbf{x}_1 - \mathbf{y})(\mathbf{x}_2 - \mathbf{y})\ln \left(\frac{\lambda}{2}r^{(a,b)}(\mathbf{x}_2, \mathbf{y})\right)J_0\left(\lambda r^{(a,b)}(\mathbf{x}_2, \mathbf{y})\right)\right|\big|}{\left|\mathbf{x}_1 - \mathbf{y}\right|^2} \\
&\leq \frac{M_{\mathbf{z}}\left|\left|J_0\left(\lambda r^{(a,b)}(\mathbf{x}_1, \mathbf{y})\right)\right| - \left|J_0\left(\lambda r^{(a,b)}(\mathbf{x}_2, \mathbf{y})\right)\right|\right|}{\left|\mathbf{x}_1 - \mathbf{y}\right|^2} \leq M_{\mathbf{z}}L_{\mathbf{z}}\left|\mathbf{x}_1 - \mathbf{y}\right|^{-2}\left|\mathbf{x}_1 - \mathbf{x}_2\right|.
\end{align*}
}
Therefore, $S_{\lambda}^{(a,b)}(\mathbf{x}, \mathbf{y})$ satisfies condition~(2) of Theorem~\ref{theo3} in $V_{\mathbf{z}}$ with $m = 1$ and $\alpha = 1$, for every $\mathbf{z} \in \partial \Omega$. Since $\partial \Omega$ is compact, the constants $M_{\mathbf{z}}$ and $L_{\mathbf{z}}$ can be chosen uniformly across all $\mathbf{z}$.

Moreover, by choosing $V_{\mathbf{z}}$ sufficiently small, it is straightforward to verify that condition~(1) of Theorem~\ref{theo3} is also satisfied within $V_{\mathbf{z}}$.

Finally, the domain $D$ can be taken as the union $\bigcup_{\mathbf{z} \in \partial \Omega} V_{\mathbf{z}}$, which is closed and contains $\partial \Omega$.
\end{proof}

\begin{remark}\label{rem1}
Based on the preceding proposition, we may apply Theorem~\ref{theo3} to the weakly singular part of the fundamental solution $\Phi_{\lambda}^{(a,b)}(\mathbf{x}, \mathbf{y})$. As a result, there exists a domain $D \subset \mathbb{R}^2$ containing $\partial \Omega$ such that the operator
\begin{align*}
\mathcal{K}^{(a,b)}_s: \varphi \in C(\partial \Omega) \mapsto \int_{\partial \Omega} S^{(a,b)}_{\lambda}(\mathbf{x}, \mathbf{y})\,\varphi(\mathbf{y})\,ds(\mathbf{y}), \quad \mathbf{x} \in D,
\end{align*}
acts continuously from $C(\partial \Omega)$ into the Hölder space $C^{0,\beta}(D)$ for some $\beta \in (0,1]$.

In contrast, for the continuous part $C^{(a,b)}(\mathbf{x}, \mathbf{y})$ of the fundamental solution, the operator
\begin{align*}
\mathcal{K}^{(a,b)}_c(\varphi)(\mathbf{x}) := \int_{\partial \Omega} C^{(a,b)}(\mathbf{x}, \mathbf{y})\,\varphi(\mathbf{y})\,ds(\mathbf{y}), \quad \mathbf{x} \in D,
\end{align*}
is well-defined and yields a function in $C^0(D)$ for every $\varphi \in C(\partial \Omega)$.
\end{remark}
Remark~\ref{rem1} holds practical significance for the problem under consideration. Specifically, it implies that
\begin{align}\label{lim1}
\lim_{\mathbf{x} \to \mathbf{z}} \mathcal{K}_{\lambda}^{(a,b)}[\varphi](\mathbf{x}) = \mathcal{K}_{\lambda}^{(a,b)}[\varphi](\mathbf{z}),
\end{align}
where the limit is taken as $\mathbf{x}$ approaches $\mathbf{z}$ from within either $\Omega$ or $\overline{\Omega}^c$, for every $\mathbf{z} \in \partial \Omega$ and $\varphi \in C(\partial \Omega)$. This continuity property stands in contrast to the behavior of the double-layer potential $\mathcal{W}_{\lambda}^{(a,b)}[\varphi](\cdot)$, which exhibits a removable discontinuity on $\partial \Omega$ for any $\varphi \in C(\partial \Omega)$.

We now aim to determine the limiting values described in \eqref{lim1} for the double-layer potential $\mathcal{W}_{\lambda}^{(a,b)}[\varphi](\mathbf{x})$, where $\varphi \in C(\partial \Omega)$ and $\mathbf{x} \in \mathbb{R}^2 \setminus \partial \Omega$. In this case, the kernel associated with the double-layer potential, $\nabla^{(a,b)}_{\mathbf{y}}\Phi^{(a,b)}_{\lambda}(\mathbf{x}, \mathbf{y})\cdot\mathbf{n}(\mathbf{y})$, does not satisfy the assumptions of Theorem~\ref{theo3}. Consequently, continuity of the potential on $\partial \Omega$ cannot be guaranteed. 

In fact, if we denote
\begin{align*}
v^{\pm}(\mathbf{z}) := \lim_{\mathbf{x} \to \mathbf{z}} \mathcal{W}_{\lambda}^{(a,b)}[\varphi](\mathbf{x}), \quad \text{with $\mathbf{x} \in \Omega$ or $\mathbf{x} \in \overline{\Omega}^c,$ respectively,}
\end{align*}
then it follows that
\begin{align}\label{plejform}
v^{\pm}(\mathbf{z}) = \mathcal{W}_{\lambda}^{(a,b)}[\varphi](\mathbf{z}) \mp \frac{1}{2}\varphi(\mathbf{z}), \quad \text{for all $\mathbf{z} \in \partial \Omega$.}
\end{align}
Note that, in particular, this implies that $\mathcal{W}_{\lambda}^{(a,b)}[\varphi](\mathbf{x})$ exhibits a jump discontinuity across $\partial \Omega$, with the magnitude of the jump being precisely $\varphi(\mathbf{z})$. We provide only a sketch of the proof, as it is relatively straightforward. 

To this end, observe that the difference 
\[
\nabla_{\mathbf{y}}^{(a,b)}\Phi_{\lambda}^{(a,b)}(\mathbf{x}, \mathbf{y}) \cdot \mathbf{n}(\mathbf{y}) - \nabla_{\mathbf{y}}^{(a,b)}\Phi_{0}^{(a,b)}(\mathbf{x}, \mathbf{y}) \cdot \mathbf{n}(\mathbf{y})
\]
satisfies the conditions of Theorem~\ref{theo3}. Therefore, the associated double-layer potential is continuous on $\partial \Omega$. It follows that the jump discontinuity arises entirely from the term
\begin{align*}
\mathcal{W}^{(a,b)}_0(\varphi)(\mathbf{x}) := \int_{\partial \Omega}\nabla_{\mathbf{y}}^{(a,b)}\Phi_{0}^{(a,b)}(\mathbf{x}, \mathbf{y})\cdot\mathbf{n}(\mathbf{y})\varphi(\mathbf{y})\,ds(\mathbf{y}),
\end{align*}
which corresponds to the double-layer potential for the anisotropic Laplace operator. This kernel has been thoroughly analyzed in the literature. For example, Brebbia et al.~\cite{Brebbia1984} show that the potential $\mathcal{W}^{(a,b)}_0(\varphi)(\mathbf{x})$ satisfies the jump relations stated in \eqref{plejform}. 

We will now summarize the analysis presented above in the following proposition.
\begin{proposition}\label{propLim}
Let $\Omega \subset \mathbb{R}^2$ be a bounded domain with piecewise smooth boundary $\partial \Omega$ of class $C^2$, and let $\left\{\mathbf{z}_1,\ldots,\mathbf{z}_l\right\}$ denote the corner points. Let $\Phi_{\lambda}^{(a,b)}(\mathbf{x}, \mathbf{y})$ be the fundamental solution of the anisotropic Helmholtz operator, and let $\nabla_{\mathbf{y}}^{(a,b)}\Phi_{\lambda}^{(a,b)}(\mathbf{x}, \mathbf{y}) \cdot \mathbf{n}(\mathbf{y})$ denote its anisotropic normal derivative, both defined on $\mathbb{R}^2 \setminus \partial \Omega$. 

For any $\mathbf{z} \in \partial \Omega$ and $\varphi \in C(\partial \Omega)$, define the non-tangential limits:
\begin{align*}
u_{\lambda}^{\pm}(\mathbf{z}) &= \lim_{\mathbf{x} \to \mathbf{z}} \mathcal{K}_{\lambda}^{(a,b)}[\varphi](\mathbf{x}), \\
v_{\lambda}^{\pm}(\mathbf{z}) &= \lim_{\mathbf{x} \to \mathbf{z}} \mathcal{W}_{\lambda}^{(a,b)}[\varphi](\mathbf{x}),
\end{align*}
where the limits are taken as $\mathbf{x}$ approaches $\mathbf{z}$ from within $\Omega$ and from within $\overline{\Omega}^c$, respectively. 

If the kernel $\Phi_{\lambda}^{(a,b)}(\mathbf{x}, \mathbf{y})$ satisfies the conditions of Theorems~\ref{theo2} and~\ref{theo3}, and the kernel $\nabla_{\mathbf{y}}^{(a,b)}\Phi_{\lambda}^{(a,b)}(\mathbf{x}, \mathbf{y}) \cdot \mathbf{n}(\mathbf{y})$ satisfies the conditions of Theorem~\ref{theo3}, then the following relations hold:
\begin{align}
\nonumber
u_{\lambda}^{+}(\mathbf{z}) &= u_{\lambda}^{-}(\mathbf{z}) = \mathcal{K}_{\lambda}^{(a,b)}[\varphi](\mathbf{z}), \\
v_{\lambda}^{\pm}(\mathbf{z}) &= \mathcal{W}_{\lambda}^{(a,b)}[\varphi](\mathbf{z}) + c(\mathbf{z})\varphi(\mathbf{z}). \label{forlimit1}
\end{align}
Here, the coefficient $c(\mathbf{z})$ is given by
\begin{align*}
c(\mathbf{z}) = 
\begin{cases}
\mp \dfrac{1}{2}, & \text{if } \mathbf{z} \in \partial \Omega \setminus \left\{\mathbf{z}_1,\ldots,\mathbf{z}_l\right\}, \\
\mp \dfrac{\theta(\mathbf{z})}{2\pi}, & \text{if } \mathbf{z} \in \left\{\mathbf{z}_1,\ldots,\mathbf{z}_l\right\},
\end{cases}
\end{align*}
where $\theta(\mathbf{z})$ denotes the internal angle at the corner point $\mathbf{z}$. 

The singular integral appearing on the right-hand side of \eqref{forlimit1} is understood in the improper sense.
\end{proposition}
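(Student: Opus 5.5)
The proof separates the singular part of the double-layer kernel and then reduces to the anisotropic Laplace case by a linear change of variables. That change of variables turns the operator into the isotropic Laplacian, where the classical solid-angle argument applies.

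\textbf{Step 1 (single layer).} The statement for $u_\lambda^{\pm}$ follows from the decomposition $\Phi_\lambda^{(a,b)}=S_\lambda^{(a,b)}+C^{(a,b)}$ and Remark~\ref{rem1}. The operator $\mathcal{K}_s^{(a,b)}$ maps $C(\partial\Omega)$ into $C^{0,\beta}(D)$ with $D$ a closed neighbourhood of $\partial\Omega$, and $\mathcal{K}_c^{(a,b)}$ yields a continuous function on $D$. Hence $\mathcal{K}_\lambda^{(a,b)}[\varphi]$ is continuous on $D$. Its limits from both sides therefore equal its value at $\mathbf{z}$, which Theorem~\ref{theo2} identifies with the improper boundary integral. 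Corner points cause no difficulty, since the weak singularity bound of Theorem~\ref{theo1} with $\alpha=1$ holds on each smooth arc.

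\textbf{Step 2 (reduction for the double layer).} Write
\[
\nabla^{(a,b)}_{\mathbf{y}}\Phi^{(a,b)}_\lambda\cdot\mathbf{n}=\bigl(\nabla^{(a,b)}_{\mathbf{y}}\Phi^{(a,b)}_\lambda-\nabla^{(a,b)}_{\mathbf{y}}\Phi^{(a,b)}_0\bigr)\cdot\mathbf{n}+\nabla^{(a,b)}_{\mathbf{y}}\Phi^{(a,b)}_0\cdot\mathbf{n}.
\]
The prefactors must be normalised consistently, so that the $1/r$ term of $\tfrac{\lambda}{4}N_1$ cancels against $\Phi_0$. With the paper's stated normalisations the two $1/r$ singularities differ by a factor $1/(2\pi)$, so I would rescale $\Phi_0^{(a,b)}$ to achieve cancellation. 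The difference kernel is, by the expansion of $N_1$ in the proof of Theorem~\ref{theo1}, of the form
\[
(\mathbf{x}-\mathbf{y},\mathbf{n}(\mathbf{y}))\,\bigl[c_1\ln r+\text{smooth}\bigr].
\]
It is $O(|\ln|\mathbf{x}-\mathbf{y}||)$ uniformly in $\mathbf{x}\in D$, and it satisfies conditions (1)–(2) of Theorem~\ref{theo3} with any $\alpha<2$, close to $2$. Checking condition (2) is a mean value estimate on $\mathbf{x}\mapsto (\mathbf{x}-\mathbf{y})\ln r^{(a,b)}(\mathbf{x},\mathbf{y})$, in the same way as the Proposition for $S_\lambda^{(a,b)}$. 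Theorem~\ref{theo3} then makes this part continuous across $\partial\Omega$, so it contributes no jump.

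\textbf{Step 3 (the Laplace part and the constant $c(\mathbf{z})$).} Apply the map $T(\mathbf{y})=(y_1/\sqrt a,\,y_2/\sqrt b)$, which sends $r^{(a,b)}$ to the Euclidean distance in the image. A direct computation should show that $\nabla^{(a,b)}_{\mathbf{y}}\Phi_0^{(a,b)}\cdot\mathbf{n}\,ds(\mathbf{y})$ becomes, up to the constant, the classical kernel
\[
\frac{(\bar{\mathbf{x}}-\bar{\mathbf{y}})\cdot\bar{\mathbf{n}}}{|\bar{\mathbf{x}}-\bar{\mathbf{y}}|^2}\,d\bar s .
\]
The factors $\sqrt{ab}$ and the rescaling of normal and arclength should combine exactly. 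This is the identity $\sigma\mathbf{n}\,ds \leftrightarrow \sqrt{ab}\,\bar{\mathbf{n}}\,d\bar s$ for $\sigma=\mathrm{diag}(a,b)$. The anisotropic double layer is thus the ordinary harmonic double layer on $T(\partial\Omega)$.

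For the harmonic double layer I would use the standard argument. Subtract $\varphi(\mathbf{z})$ and write
\[
\mathcal{W}_0[\varphi](\mathbf{x})=\mathcal{W}_0[\varphi-\varphi(\mathbf{z})](\mathbf{x})+\varphi(\mathbf{z})\mathcal{W}_0[1](\mathbf{x}).
\]
The first term is continuous at $\mathbf{z}$ for non-tangential approach. The second is the Gauss integral: it equals the constant interior or exterior solid-angle value off the boundary, and at $\mathbf{z}$ it equals the angle subtended by the boundary. On smooth points this gives the jump $\mp\tfrac12\varphi(\mathbf{z})$. At a corner it gives $\mp\theta/(2\pi)$ relative to the appropriate normalisation, with $\theta$ the interior angle of $T(\Omega)$ at $T(\mathbf{z})$.

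\textbf{Main obstacle.} The hardest step is the corner case. A linear map does not preserve angles, so $\theta(\mathbf{z})$ must be interpreted as the interior angle in the transformed geometry. For the rectangles at hand the angles are right angles and are preserved by diagonal $T$, so I would point this out. The other delicate point is proving continuity of $\mathcal{W}_0[\varphi-\varphi(\mathbf{z})]$ at a corner. There I would use the boundedness of $\int_{\partial\Omega}|\partial_{\mathbf{n}}\ln|\mathbf{x}-\mathbf{y}||\,ds$ uniformly in $\mathbf{x}$ near $\mathbf{z}$ (bounded rotation of a polygonal boundary) together with the continuity of $\varphi$, following Brebbia et al.~\cite{Brebbia1984}.
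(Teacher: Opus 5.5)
\textbf{Gap in Step 3: your argument does not produce the stated constants.}

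Your Steps 1 and 2 reproduce the paper's own sketch. Continuity of the single layer comes from Remark~\ref{rem1}, and subtracting the $\lambda=0$ kernel puts the remainder under Theorem~\ref{theo3}. Your Step 3 goes further than the paper, which only cites Brebbia et al. With $\Phi_0^{(a,b)}$ normalised as you propose, the identity $\nabla^{(a,b)}_{\mathbf{y}}\Phi_0^{(a,b)}\cdot\mathbf{n}\,ds=\partial_{\bar{\mathbf{n}}(\bar{\mathbf{y}})}\bigl(\frac{1}{2\pi}\ln|\bar{\mathbf{x}}-\bar{\mathbf{y}}|\bigr)\,d\bar s$ under $T$ is correct. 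Your remark that $\theta$ must be the angle of $T(\Omega)$ at $T(\mathbf{z})$ is a real point the paper misses. The gap is the last sentence of Step 3. There you claim the Gauss-integral argument gives $\mp\theta/(2\pi)$ ``relative to the appropriate normalisation''. It does not, and no normalisation makes it do so.

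The normalisation is not at your disposal. Step 2 forces $\Phi_0^{(a,b)}=\frac{1}{2\pi\sqrt{ab}}\ln r^{(a,b)}$, with the same sign as $\Phi_\lambda^{(a,b)}$, because $N_0(t)=\frac{2}{\pi}\ln t+O(1)$. Do not take the sign from the paper's displayed formula for $\nabla^{(a,b)}_{\mathbf{y}}\Phi_0^{(a,b)}\cdot\mathbf{n}$, since that formula has its sign reversed. With outward normals, Gauss's integral gives $\mathcal{W}_0[1]=1$ in $\Omega$, $\mathcal{W}_0[1]=0$ in $\overline{\Omega}^c$, and $\mathcal{W}_0[1]=\theta/(2\pi)$ at $\mathbf{z}$. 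Your splitting $\mathcal{W}_0[\varphi]=\mathcal{W}_0[\varphi-\varphi(\mathbf{z})]+\varphi(\mathbf{z})\mathcal{W}_0[1]$ then yields
\[
v_\lambda^{+}(\mathbf{z})=\mathcal{W}_\lambda^{(a,b)}[\varphi](\mathbf{z})+\Bigl(1-\frac{\theta(\mathbf{z})}{2\pi}\Bigr)\varphi(\mathbf{z}),\qquad
v_\lambda^{-}(\mathbf{z})=\mathcal{W}_\lambda^{(a,b)}[\varphi](\mathbf{z})-\frac{\theta(\mathbf{z})}{2\pi}\varphi(\mathbf{z}),
\]
with $\theta(\mathbf{z})=\pi$ at smooth points. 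This disagrees with the statement in two ways:
\begin{itemize}
\item At smooth points the interior limit carries $+\frac12\varphi(\mathbf{z})$, not $-\frac12\varphi(\mathbf{z})$.
\item At a corner the interior coefficient is $1-\theta/(2\pi)$, not $\theta/(2\pi)$.
\end{itemize}
The second discrepancy survives any sign convention. The jump $v_\lambda^+-v_\lambda^-$ has modulus $|\varphi(\mathbf{z})|$ at every boundary point, whereas the stated $c(\mathbf{z})$ gives $\theta(\mathbf{z})|\varphi(\mathbf{z})|/\pi$. Brebbia's $c_i=\theta/(2\pi)$ is the coefficient of $u(\mathbf{z})$ in the boundary form of Green's representation formula. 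That equals $1$ minus the interior jump coefficient; it is not a symmetric $\mp$ jump.

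So your argument, carried out correctly, proves the corrected relation above, not the proposition as stated. You should state and prove that version explicitly rather than absorb the mismatch into a ``normalisation''. This also affects Section 4. The term $(1-c(\mathbf{z}))u(\mathbf{z})$ there should read $\frac{\theta}{2\pi}u(\mathbf{z})$. On the rectangles this gives factors $\frac12$ and $\frac14$, not $\frac32$ and $\frac54$.
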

Finally, we present a result concerning the anisotropic normal derivatives of the limiting values of the single- and double-layer potentials.
\begin{proposition}
Let $\varphi \in C(\partial \Omega)$. Then, the limits of the anisotropic normal derivatives of the single- and double-layer potentials satisfy the following:
\begin{align}
\nabla^{(a,b)}_{\mathbf{x}}\left(\mathcal{K}^{(a,b)}_{\lambda}[\varphi](\mathbf{z})\right)^{\pm} \cdot \mathbf{n}(\mathbf{z}) &= \int_{\partial \Omega}\nabla^{(a,b)}_{\mathbf{x}}\Phi^{(a,b)}_{\lambda}(\mathbf{z}, \mathbf{y}) \cdot \mathbf{n}(\mathbf{y})\, \varphi(\mathbf{y})\, ds(\mathbf{y}) - c(\mathbf{z})\varphi(\mathbf{z}), \label{limit3} \\[1.5ex]
\nabla^{(a,b)}_{\mathbf{x}}\left(\mathcal{W}^{(a,b)}_{\lambda}[\varphi](\mathbf{z})\right)^{+} \cdot \mathbf{n}(\mathbf{z}) &= \nabla^{(a,b)}_{\mathbf{x}}\left(\mathcal{W}^{(a,b)}_{\lambda}[\varphi](\mathbf{z})\right)^{-} \cdot \mathbf{n}(\mathbf{z}). \label{limit4}
\end{align}
Here, the limits are taken as
\begin{align*}
\nabla^{(a,b)}_{\mathbf{x}}\left(\mathcal{K}^{(a,b)}_{\lambda}[\varphi](\mathbf{z})\right)^{\pm} \cdot \mathbf{n}(\mathbf{z}) &= \lim_{h \to 0^{\pm}} \nabla^{(a,b)}_{\mathbf{x}}\left(\mathcal{K}^{(a,b)}_{\lambda}[\varphi](\mathbf{z} + h \mathbf{n}(\mathbf{z}))\right) \cdot \mathbf{n}(\mathbf{z}), \\
\nabla^{(a,b)}_{\mathbf{x}}\left(\mathcal{W}^{(a,b)}_{\lambda}[\varphi](\mathbf{z})\right)^{\pm} \cdot \mathbf{n}(\mathbf{z}) &= \lim_{h \to 0^{\pm}} \nabla^{(a,b)}_{\mathbf{x}}\left(\mathcal{W}^{(a,b)}_{\lambda}[\varphi](\mathbf{z} + h \mathbf{n}(\mathbf{z}))\right) \cdot \mathbf{n}(\mathbf{z}).
\end{align*}
\end{proposition}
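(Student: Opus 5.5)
The plan follows the same route the paper uses for Proposition~\ref{propLim}: peel off the Laplace part of the kernel, show the remainder is harmless in the sense of Theorem~\ref{theo3}, and import the classical jump relations for the anisotropic Laplace operator. The first step is to write $\Phi^{(a,b)}_{\lambda} = \Phi^{(a,b)}_{0} + R^{(a,b)}_{\lambda}$, using the series expansions of $N_0$ and $N_1$ recalled earlier. The remainder $R^{(a,b)}_{\lambda}(\mathbf{x},\mathbf{y})$ behaves like $r^2\ln r$ near the diagonal, up to a smooth function of $r^{(a,b)}$. Its anisotropic gradient in $\mathbf{x}$ is therefore $O(|\mathbf{x}-\mathbf{y}|\,|\ln|\mathbf{x}-\mathbf{y}||)$. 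Because $r^{(a,b)}$ is comparable to $|\mathbf{x}-\mathbf{y}|$ with constants depending only on $a,b$, this remainder gradient satisfies conditions (1) and (2) of Theorem~\ref{theo3} with some $\alpha>1$. I will check this as in the proposition on $S^{(a,b)}_\lambda$, by differentiating once more and using the mean value theorem on the segment $[\mathbf{x}_1,\mathbf{x}_2]$ when $2|\mathbf{x}_1-\mathbf{x}_2|\le|\mathbf{x}_1-\mathbf{y}|$. Theorem~\ref{theo3} then makes the remainder part of $\nabla^{(a,b)}_{\mathbf{x}}\mathcal{K}^{(a,b)}_\lambda[\varphi]\cdot\mathbf{n}(\mathbf{z})$ continuous across $\partial\Omega$. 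With that, \eqref{limit3} is reduced to the case $\lambda=0$.

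For $\lambda=0$, the anisotropic gradient in $\mathbf{x}$ of $\Phi^{(a,b)}_0$ is, up to sign, the anisotropic gradient in $\mathbf{y}$, since the kernel depends only on $\mathbf{x}-\mathbf{y}$. I will use the change of variables $\bar{\mathbf{y}}=(y_1/\sqrt a,y_2/\sqrt b)$ from Section~2, which turns the anisotropic Laplacian into the classical one and maps $\Omega$ to a piecewise $C^2$ domain $\bar\Omega$. This map sends the conormal $\sigma\mathbf{n}$ to a multiple of the normal of $\bar\Omega$, and the arclength factor exactly compensates this multiple. Consequently $\nabla^{(a,b)}_{\mathbf{x}}\mathcal{K}^{(a,b)}_0[\varphi]\cdot\mathbf{n}$ becomes the classical normal derivative of a logarithmic single-layer potential on $\partial\bar\Omega$. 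The classical jump relation (Colton--Kress, or Brebbia et al.~\cite{Brebbia1984}) then gives the direct value plus the term $-c(\mathbf{z})\varphi(\mathbf{z})$. This is the mirror image of \eqref{plejform}: the sign flips because the derivative falls on $\mathbf{x}$, so the kernel is the adjoint double-layer kernel. At corner points the same solid-angle computation that gives $\theta(\mathbf{z})/2\pi$ in Proposition~\ref{propLim} applies, with the angle measured in the transformed variables.

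For \eqref{limit4}, I will take a test point $\mathbf{x}_h=\mathbf{z}+h\mathbf{n}(\mathbf{z})$ and a symmetric pair $h$, $-h$. The strategy is to show that the difference of the anisotropic normal derivatives tends to zero. The standard approach writes
\[
\mathcal{W}^{(a,b)}_\lambda[\varphi]=\mathcal{W}^{(a,b)}_\lambda[\varphi-\varphi(\mathbf{z})]+\varphi(\mathbf{z})\,\mathcal{W}^{(a,b)}_\lambda[1].
\]
The second term is handled through Green's identity: $\mathcal{W}^{(a,b)}_\lambda[1]$ equals $-\lambda^2\int_\Omega\Phi^{(a,b)}_\lambda$ inside $\Omega$ and the analogous volume integral outside. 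Volume potentials with a logarithmic kernel are $C^1$ across $\partial\Omega$, so this term contributes no jump in the normal derivative. For the first term I will follow the Lyapunov--Tauber argument in Colton--Kress (Theorem~2.23 there), transported by the same change of variables to reduce it to the isotropic case.

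The main obstacle is that $\varphi$ is only continuous. The Lyapunov--Tauber theorem needs the \emph{existence} of one of the one-sided limits, which requires more regularity of $\varphi$, for instance Hölder continuity. The statement should therefore be read as: whenever one side exists, so does the other and they coincide. I will state this restriction explicitly and otherwise assume $\varphi\in C^{0,\beta}(\partial\Omega)$. Corners are a second delicate point, since normal derivatives there are not well defined. I plan to restrict \eqref{limit4} to regular points $\mathbf{z}$ and to treat the corner coefficient in \eqref{limit3} only through the geometric angle computation above.
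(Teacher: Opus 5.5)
\textbf{Verdict.} At regular points your argument for \eqref{limit3} is correct, but it follows a different route from the paper's.

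\textbf{The paper's route.} The paper does not split off the Laplace kernel here. With $\mathbf{x}=\mathbf{z}+h\mathbf{n}(\mathbf{z})$ it uses the identity
\begin{multline*}
\nabla^{(a,b)}_{\mathbf{x}}\mathcal{K}^{(a,b)}_{\lambda}[\varphi](\mathbf{x})\cdot\mathbf{n}(\mathbf{z})+\mathcal{W}^{(a,b)}_{\lambda}[\varphi](\mathbf{x})\\
=\int_{\partial\Omega}\nabla^{(a,b)}_{\mathbf{y}}\Phi^{(a,b)}_{\lambda}(\mathbf{x},\mathbf{y})\cdot\bigl(\mathbf{n}(\mathbf{y})-\mathbf{n}(\mathbf{z})\bigr)\varphi(\mathbf{y})\,ds(\mathbf{y}),
\end{multline*}
which follows from $\nabla^{(a,b)}_{\mathbf{x}}\Phi^{(a,b)}_{\lambda}=-\nabla^{(a,b)}_{\mathbf{y}}\Phi^{(a,b)}_{\lambda}$. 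It observes that the right-hand side is continuous across $\partial\Omega$. It then reads the jump of the single-layer conormal derivative off the double-layer jump \eqref{forlimit1}.

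Once \eqref{forlimit1} is available, this is the more economical route. The continuity step is elementary: along the normal line the kernel is bounded by a multiple of $|\mathbf{y}-\mathbf{z}|/|\mathbf{x}-\mathbf{y}|\le C$, so dominated convergence applies.

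\textbf{Your route.} You do not use \eqref{forlimit1}. You discard a harmless remainder, map to the isotropic Laplacian, and cite the classical single-layer jump. The price is the conormal/arclength bookkeeping, which you did correctly: the coefficient of $\varphi(\mathbf{z})$ comes out as exactly $\pm\frac12$, independent of $a$, $b$ and $\mathbf{n}(\mathbf{z})$.

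\textbf{Two small points.}
\begin{itemize}
\item The remainder is $O(r^2\ln r)$ only if you take $\Phi^{(a,b)}_0=\frac{1}{2\pi\sqrt{ab}}\ln r^{(a,b)}$, the logarithmic part of $\frac{1}{4\sqrt{ab}}N_0(\lambda r^{(a,b)})$. With the paper's normalization $\frac{1}{\sqrt{ab}}\ln r^{(a,b)}$, a logarithm survives in the remainder.
\item Both routes produce the adjoint kernel $\nabla^{(a,b)}_{\mathbf{x}}\Phi^{(a,b)}_{\lambda}(\mathbf{z},\mathbf{y})\cdot\mathbf{n}(\mathbf{z})$. So the $\mathbf{n}(\mathbf{y})$ printed in \eqref{limit3} has to be read as $\mathbf{n}(\mathbf{z})$, as you implicitly did.
\end{itemize}

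\textbf{On \eqref{limit4}.} The paper gives no argument at all. Your splitting $\varphi=(\varphi-\varphi(\mathbf{z}))+\varphi(\mathbf{z})$ is a genuine addition; it reduces $\mathcal{W}^{(a,b)}_{\lambda}[1]$ by Green's identity to a piecewise constant minus $\lambda^2$ times a $C^1$ volume potential.

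\textbf{Two of your side remarks need correcting.}

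\emph{Hölder continuity does not make the one-sided limits in \eqref{limit4} exist.} After your change of variables, near a point of a straight edge the double layer is, up to a term with continuous gradient, a multiple of the Poisson extension of $\varphi$. For $\varphi(s)=|s|^{\beta}$ with $0<\beta<1$ near $s=0$, its normal derivative at height $h$ above $s=0$ grows like $h^{\beta-1}$. Existence of the one-sided limits needs $\varphi\in C^{1,\beta}$. Conversely, what you actually announce needs no extra hypothesis. The Colton--Kress form of the Lyapunov--Tauber theorem states that the difference of the normal derivatives at $\mathbf{z}\pm h\mathbf{n}(\mathbf{z})$ tends to zero, uniformly, for merely continuous densities. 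Read \eqref{limit4} in that sense and drop the $C^{0,\beta}$ assumption.

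\emph{Corners must be excluded from \eqref{limit3} as well; no angle computation rescues them.}
\begin{itemize}
\item At a corner $\mathbf{n}(\mathbf{z})$ is undefined. Let $\mathbf{n}_1$ be the normal of one of the two edges meeting at $\mathbf{z}$. On the paper's rectangles, the interior half of the path $\mathbf{z}+h\mathbf{n}_1$ runs along the other edge.
\item For $\mathbf{y}$ on that other edge, $(\mathbf{z}-\mathbf{y})\cdot\mathbf{n}_1$ is of order $|\mathbf{z}-\mathbf{y}|$. Hence the adjoint kernel is of order $|\mathbf{z}-\mathbf{y}|^{-1}$, and the direct value in \eqref{limit3} diverges whenever $\varphi(\mathbf{z})\neq0$.
\item The angle formula works for the double layer only because $(\mathbf{z}-\mathbf{y})\cdot\mathbf{n}(\mathbf{y})$ is $O(|\mathbf{z}-\mathbf{y}|^2)$ on each edge through $\mathbf{z}$ (and vanishes on straight ones). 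The adjoint kernel has no such cancellation.
\end{itemize}
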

\begin{proof}
Let $\mathbf{x} = \mathbf{z} + h\, \mathbf{n}(\mathbf{z})$. Then, we can write:
\begin{align*}
\nabla^{(a,b)}_{\mathbf{x}}\left(\mathcal{K}^{(a,b)}_{\lambda}[\varphi](\mathbf{x})\right)\cdot\mathbf{n}(\mathbf{z}) + \mathcal{W}^{(a,b)}_{\lambda}[\varphi](\mathbf{x}) = \int_{\partial \Omega}\nabla^{(a,b)}_{\mathbf{y}}\Phi^{(a,b)}_{\lambda}(\mathbf{x}, \mathbf{y}) \cdot \left(\mathbf{n}(\mathbf{y}) - \mathbf{n}(\mathbf{z})\right)\varphi(\mathbf{y})\, ds(\mathbf{y}).
\end{align*}

It can be shown that the kernel on the right-hand side satisfies the conditions of Theorem~\ref{theo3}. Indeed, since $\mathbf{n}(\mathbf{y})$ is smooth and $\mathbf{n}(\mathbf{y}) - \mathbf{n}(\mathbf{z})$ vanishes as $\mathbf{y} \to \mathbf{z}$, the integrand becomes weakly singular with sufficient decay. Consequently, the integral defines a continuous function on $\partial \Omega$, and its limit as $\mathbf{x} \to \mathbf{z}$ exists and vanishes.

This establishes the validity of formula~\eqref{limit3}. As for the identity in~\eqref{limit4}, we omit its proof here, as it follows from similar estimates to those already used in previous results.
\end{proof}

\section{Obtaining the System of Singular Boundary Integral Equations (BIEs) Associated with the Boundary Problem}

We begin by briefly summarizing the results obtained so far. First, we formulated the boundary value problem that we aim to solve. This is an eigenvalue problem for the electrical potential $u(\mathbf{x})$, defined for $\mathbf{x} \in \Omega$, subject to homogeneous boundary conditions on $\partial_{\text{ext}}\Omega$ for the current flux, together with contact conditions on $\Gamma_{IS}$ and $\Gamma_{SD}$. The complete formulation of this boundary value problem is given by equations~\eqref{Eqn1}, \eqref{BC1}, \eqref{PCC1}, and \eqref{PCC2}.

In Section~\ref{DirForSec}, we showed that the solutions can be represented as the sum of a single-layer potential and a double-layer potential. These representations are provided in Equations~\eqref{repreuieI}, \eqref{repreuieS}, and \eqref{repreuieD}, corresponding to $u^{(I)}_{i,e}$, $u^{(S)}_{i,e}$, and $u^{(D)}_{i,e}$, respectively. We refer to this representation as the \emph{direct formulation}.

The direct formulation reduces the solution of the original boundary value problem to the determination of the boundary values $u^{(I)}$ and $p^{(I)}$ on $\partial \Omega_I$, $u^{(S)}$ and $p^{(S)}$ on $\partial \Omega_S$, and $u^{(D)}$ and $p^{(D)}$ on $\partial \Omega_D$. These quantities will be treated as the unknowns from this point forward.

However, the boundary and contact conditions alone are not sufficient to fully determine these unknowns. To overcome this, we employ the limiting expressions provided in Proposition~\ref{propLim} to derive a system of singular boundary integral equations (BIEs) governing the unknown boundary values.

With the aim of applying the results from Section~\ref{BIEsect}, we observe that the fundamental solutions associated with the boundary value problems for the electrical potential can be written as
\begin{align*}
u^{(*,I)}(\mathbf{x}, \mathbf{y}; \lambda) &= \Phi^{\left(\sigma_l, \sigma_l\right)}_{\lambda}(\mathbf{x}, \mathbf{y}),\\[0.2cm]
p^{(*,I)}(\mathbf{x}, \mathbf{y}; \lambda) &= \nabla^{\left(\sigma_l,\sigma_t\right)}_{\mathbf{y}} \Phi_{\lambda}^{\left(\sigma_l,\sigma_t\right)}(\mathbf{x}, \mathbf{y}) \cdot \mathbf{n}(\mathbf{y}),
\end{align*}
where $\lambda \geq 0$, and $\mathbf{x}, \mathbf{y} \in \Omega_I$ with $\mathbf{x} \neq \mathbf{y}$. The right-hand sides correspond to the expressions given in \eqref{Phiab} and~\eqref{GradPhiab}, with $a = \sigma^{(i,e)}_l$ and $b = \sigma^{(i,e)}_t$. 

Similarly, for the solution in $\Omega_S$, we obtain
\begin{align*}
u^{(*,S)}(\mathbf{x}, \mathbf{y}; \lambda) &= \Phi^{\left(\sigma_t, \sigma_l\right)}_{\lambda}(\mathbf{x}, \mathbf{y}),\\[0.2cm]
p^{(*,S)}(\mathbf{x}, \mathbf{y}; \lambda) &= \nabla^{\left(\sigma_t,\sigma_l\right)}_{\mathbf{y}} \Phi_{\lambda}^{\left(\sigma_t,\sigma_l\right)}(\mathbf{x}, \mathbf{y}) \cdot \mathbf{n}(\mathbf{y}),
\end{align*}
with $\mathbf{x}, \mathbf{y} \in \Omega_S$ and $\mathbf{x} \neq \mathbf{y}$. Likewise, for the solution in $\Omega_D$, we have
\begin{align*}
u^{(*,D)}(\mathbf{x}, \mathbf{y}; \lambda) &= \Phi^{\left(\sigma_l, \sigma_t\right)}_{\lambda}(\mathbf{x}, \mathbf{y}),\\
p^{(*,D)}(\mathbf{x}, \mathbf{y}; \lambda) &= \nabla^{\left(\sigma_l,\sigma_t\right)}_{\mathbf{y}} \Phi_{\lambda}^{\left(\sigma_l,\sigma_t\right)}(\mathbf{x}, \mathbf{y}) \cdot \mathbf{n}(\mathbf{y}),
\end{align*}
with $\mathbf{x}, \mathbf{y} \in \Omega_D$ and $\mathbf{x} \neq \mathbf{y}$.

We now express the direct formulation of the electrical potential, as given in \eqref{repreuieI}, \eqref{repreuieS}, and \eqref{repreuieD}, in terms of the single- and double-layer potential operators introduced in \eqref{SP} and \eqref{DP}:
\begin{align*}
u^{(I)}(\mathbf{x}; \lambda) &= \mathcal{K}_{\lambda}^{\left(\sigma_l, \sigma_t\right)}\left[\left.u^{(I)}\right|_{\partial \Omega_I}\right](\mathbf{x}) - \mathcal{W}_{\lambda}^{\left(\sigma_l, \sigma_t\right)}\left[\left.p^{(I)}\right|_{\partial \Omega_I}\right](\mathbf{x}),\quad \mathbf{x} \in \Omega_I,\\
u^{(S)}(\mathbf{x}; \lambda) &= \mathcal{K}_{\lambda}^{\left(\sigma_t, \sigma_l\right)}\left[\left.u^{(S)}\right|_{\partial \Omega_S}\right](\mathbf{x}) - \mathcal{W}_{\lambda}^{\left(\sigma_t, \sigma_l\right)}\left[\left.p^{(S)}\right|_{\partial \Omega_S}\right](\mathbf{x}),\quad \mathbf{x} \in \Omega_S,\\
u^{(D)}(\mathbf{x}; \lambda) &= \mathcal{K}_{\lambda}^{\left(\sigma_l, \sigma_t\right)}\left[\left.u^{(D)}\right|_{\partial \Omega_D}\right](\mathbf{x}) - \mathcal{W}_{\lambda}^{\left(\sigma_l, \sigma_t\right)}\left[\left.p^{(D)}\right|_{\partial \Omega_D}\right](\mathbf{x}),\quad \mathbf{x} \in \Omega_D.
\end{align*}

Applying Proposition~\ref{propLim} by taking the limit as $\mathbf{x} \to \mathbf{z} \in \partial \Omega$, we obtain the following boundary integral equations (BIEs):
\begin{align*}
&\left(1 - c(\mathbf{z})\right)u^{(I)}(\mathbf{z}; \lambda) = \int_{\partial \Omega_I} u^{(I)}(\mathbf{y}; \lambda) p^{(*,I)}(\mathbf{z}, \mathbf{y}; \lambda)ds(\mathbf{y}) - \int_{\partial \Omega_I} p^{(I)}(\mathbf{y}; \lambda) u^{(*,I)}(\mathbf{z}, \mathbf{y}; \lambda)ds(\mathbf{y}),\; \mathbf{z} \in \partial \Omega_I,\\
&\left(1 - c(\mathbf{z})\right)u^{(S)}(\mathbf{z}; \lambda) = \int_{\partial \Omega_S} u^{(S)}(\mathbf{y}; \lambda) p^{(*,S)}(\mathbf{z}, \mathbf{y}; \lambda)ds(\mathbf{y}) - \int_{\partial \Omega_S} p^{(S)}(\mathbf{y}; \lambda) u^{(*,S)}(\mathbf{z}, \mathbf{y}; \lambda)ds(\mathbf{y}),\; \mathbf{z} \in \partial \Omega_S,\\
&\left(1 - c(\mathbf{z})\right)u^{(D)}(\mathbf{z}; \lambda) = \int_{\partial \Omega_D} u^{(D)}(\mathbf{y}; \lambda) p^{(*,D)}(\mathbf{z}, \mathbf{y}; \lambda)ds(\mathbf{y}) - \int_{\partial \Omega_D} p^{(D)}(\mathbf{y}; \lambda) u^{(*,D)}(\mathbf{z}, \mathbf{y}; \lambda)ds(\mathbf{y}),\; \mathbf{z} \in \partial \Omega_D.\\
\end{align*}

We have derived a system of three boundary integral equations (BIEs) involving six unknowns, which at this stage appear to be decoupled. However, the boundary conditions have not yet been incorporated. By imposing the conditions \eqref{BC1}, \eqref{PCC1}, and \eqref{PCC2}, we obtain an augmented system consisting of seven BIEs with seven unknowns. The resulting system is presented below.
\begin{align}
\nonumber
u^{(I)}(\mathbf{z}; \lambda) &= \int_{\Gamma_I}u^{(I)}(\mathbf{y}; \lambda)\kappa^{(I)}(\mathbf{z})p^{(*,I)}(\mathbf{z}, \mathbf{y}; \lambda)ds(\mathbf{y}) + \int_{\Gamma_{IS}}u^{(S)}(\mathbf{y}; \lambda)\kappa^{(I)}(\mathbf{z})p^{(*,I)}(\mathbf{z}, \mathbf{y}; \lambda)ds(\mathbf{y})\\ \label{BIE1}
& - \int_{\Gamma_{IS}}p^{(S)}(\mathbf{y}; \lambda)\kappa^{(I)}(\mathbf{z})u^{(*,I)}(\mathbf{z}, \mathbf{y}; \lambda)ds(\mathbf{y}),\;\; \hbox{in $\Gamma_I,$}\\[0.2cm]
\nonumber
u^{(S)}(\mathbf{z}; \lambda) &= \int_{\Gamma_I}u^{(I)}(\mathbf{y}; \lambda)\kappa^{(IS)}(\mathbf{z})p^{(*,I)}(\mathbf{z}, \mathbf{y}; \lambda)ds(\mathbf{y}) + \int_{\Gamma_{IS}}u^{(S)}(\mathbf{y}; \lambda)\kappa^{(IS)}(\mathbf{z})p^{(*,I)}(\mathbf{z}, \mathbf{y}; \lambda)ds(\mathbf{y})\\ \label{BIE4}
& - \int_{\Gamma_{IS}}p^{(S)}(\mathbf{y}; \lambda)\kappa^{(IS)}(\mathbf{z})u^{(*,I)}(\mathbf{z}, \mathbf{y}; \lambda)ds(\mathbf{y}),\;\; \hbox{in $\Gamma_{IS},$}\\[0.2cm]
\nonumber
u^{(S)}(\mathbf{z}; \lambda) &= \int_{\Gamma_S}u^{(S)}_{i,e}(\mathbf{y}; \lambda)\kappa^{(ISS)}(\mathbf{z})p^{(*,S)}(\mathbf{z}, \mathbf{y}; \lambda)ds(\mathbf{y}) + \int_{\Gamma_{IS}}u^{(S)}(\mathbf{y}; \lambda)\kappa^{(ISS)}(\mathbf{z})p^{(*,S)}(\mathbf{z}, \mathbf{y}; \lambda)ds(\mathbf{y})\\ \nonumber
& - \int_{\Gamma_{IS}}p^{(S)}(\mathbf{y}; \lambda)\kappa^{(ISS)}(\mathbf{z})u^{(*,S)}(\mathbf{z}, \mathbf{y}; \lambda)ds(\mathbf{y}) + \int_{\Gamma_{SD}}u^{(D)}(\mathbf{y}; \lambda)\kappa^{(ISS)}(\mathbf{z})p^{(*,S)}(\mathbf{z}, \mathbf{y}; \lambda)ds(\mathbf{y})\\
\label{BIE2}
& -  \int_{\Gamma_{SD}}p^{(D)}(\mathbf{y}; \lambda)u^{(*,S)}(\mathbf{z}, \mathbf{y}; \lambda)ds(\mathbf{y}),\; \; \hbox{in $\Gamma_S \cup \Gamma_{IS},$}\\[0.2cm]
\nonumber
u^{(D)}(\mathbf{z}; \lambda) &= \int_{\Gamma_{IS}}u^{(S)}(\mathbf{y}; \lambda)\kappa^{(SD)}(\mathbf{z})p^{(*,S)}(\mathbf{z}, \mathbf{y}; \lambda)ds(\mathbf{y}) - \int_{\Gamma_{IS}}p^{(S)}(\mathbf{y}; \lambda)\kappa^{(SD)}(\mathbf{z})u^{(*,S)}(\mathbf{z}, \mathbf{y}; \lambda)ds(\mathbf{y})\\
\nonumber
&+ \int_{\Gamma_{S}}u^{(S)}(\mathbf{y}; \lambda)\kappa^{(SD)}(\mathbf{z})p^{(*,S)}(\mathbf{z}, \mathbf{y}; \lambda)ds(\mathbf{y}) + \int_{\Gamma_{SD}}u^{(D)}(\mathbf{y}; \lambda)\kappa^{(SD)}(\mathbf{z})p^{(*,S)}(\mathbf{z}, \mathbf{y}; \lambda)ds(\mathbf{y})\\
&- \int_{\Gamma_{SD}}p^{(D)}(\mathbf{y}; \lambda)\kappa^{(SD)}(\mathbf{z})u^{(*,S)}(\mathbf{z}, \mathbf{y}; \lambda)ds(\mathbf{y}),\;\; \hbox{in $\Gamma_{SD},$}\\[0.2cm]
\nonumber
u^{(D)}(\mathbf{z}; \lambda) &= \int_{\Gamma_{SD}}u^{(D)}(\mathbf{y}; \lambda)\kappa^{(SDD)}(\mathbf{z})p^{(*,D)}(\mathbf{z}, \mathbf{y}; \lambda)ds(\mathbf{y}) - \int_{\Gamma_{SD}}p^{(D)}(\mathbf{y}; \lambda)\kappa^{(SDD)}(\mathbf{z})u^{(*,D)}(\mathbf{z}, \mathbf{y}; \lambda)ds(\mathbf{y})\\
&+ \int_{\Gamma_{D}}u^{(D)}(\mathbf{y}; \lambda)\kappa^{(SDD)}(\mathbf{z})p^{(*,D)}(\mathbf{z}, \mathbf{y}; \lambda)ds(\mathbf{y}),\;\; \hbox{in $\Gamma_D\cup \Gamma_{SD}.$} \label{BIE3}
\end{align}
Note that the unknowns in this system are given by
\begin{align*}
\left. u^{(I)}\right|_{\Gamma_I}, \quad \left. u^{(S)}\right|_{\Gamma_{IS}}, \quad \left. u^{(S)}\right|_{\Gamma_S}, \quad \left. u^{(D)}\right|_{\Gamma_{SD}}, \quad \left. u^{(D)}\right|_{\Gamma_D}, \quad \left. p^{(S)}\right|_{\Gamma_{IS}}, \quad \text{and} \quad \left. p^{(D)}\right|_{\Gamma_{SD}}.
\end{align*}

The system \eqref{BIE1}--\eqref{BIE3} comprises seven equations in total, since both \eqref{BIE2} and \eqref{BIE3} consist of two distinct equations.

It is also important to note that the single- and double-layer potentials appearing on the right-hand side of the equations mentioned above differ slightly from those introduced previously. In the present context, the kernels of these potentials are modified by multiplication with functions of the variable $\mathbf{z}$ that are constant except at a finite number of points. Specifically, in \eqref{BIE1}, the corresponding kernels are multiplied by
\begin{align*}
\kappa^{(I)}(\mathbf{z}) = \left\{
\begin{array}{cl}
\frac{2}{3}, & \text{if $\mathbf{z} \in \Gamma_I \setminus \left\{(0,0), (0,b)\right\}$},\\[0.2cm]
\frac{4}{5}, & \text{if $\mathbf{z} = (0,0)$ or $\mathbf{z} = (0,b)$}.
\end{array}
\right.
\end{align*}
which accounts for geometric singularities at the endpoints of the boundary.

Accordingly, in each of the following equations, the kernels are multiplied by the corresponding functions:
\begin{align*}
\kappa^{(IS)}(\mathbf{z}) = \left\{
\begin{array}{cc}
\frac{2}{3}, & \text{in $\Gamma_{IS} \setminus \left\{\left(\frac{a}{2} - h,0\right), \left(\frac{a}{2} - h,b\right)\right\}$},\\[0.2cm]
\frac{4}{5}, & \text{$\mathbf{z} = \left(\frac{a}{2} - h,0\right)$ or $\mathbf{z} = \left(\frac{a}{2} - h,b\right)$}, 
\end{array}
\right.
\end{align*}
\begin{align*}
\kappa^{(ISS)}(\mathbf{z}) = \left\{
\begin{array}{cc}
\frac{2}{3}, & \text{in $\Gamma_{IS}\cup\Gamma_S \setminus \left\{(\frac{a}{2} - h,0), (\frac{a}{2} - h,b)\right\}$},\\
\frac{4}{5}, & \text{$\mathbf{z} = (\frac{a}{2} - h,0)$ or $\mathbf{z} = (\frac{a}{2} - h,b)$}, 
\end{array}
\right.
\end{align*}
\begin{align*}
\kappa^{(SD)}(\mathbf{z}) = \left\{
\begin{array}{cc}
\frac{2}{3}, & \text{in $\Gamma_{SD} \setminus \left\{(\frac{a}{2} + h,0), (\frac{a}{2} + h,b)\right\}$},\\
\frac{4}{5}, & \text{$\mathbf{z} = (\frac{a}{2} + h,0)$ or $\mathbf{z} = (\frac{a}{2} + h,b)$}, 
\end{array}
\right.
\end{align*}
and 
\begin{align*}
\kappa^{(SDD)}(\mathbf{z}) = \left\{
\begin{array}{cc}
\frac{2}{3}, & \text{in $\Gamma_{SD}\cup \Gamma_{D} \setminus \left\{(a,0), (a,b)\right\}$},\\
\frac{4}{5}, & \text{$\mathbf{z} = (a,0)$ or $\mathbf{z} = (a,b)$}.
\end{array}
\right.
\end{align*}
Since these functions are bounded, the properties of the single- and double-layer potentials established in Section~\ref{BIEsect} remain valid for the corresponding modified kernels. Moreover, the system of equations \eqref{BIE1}-\eqref{BIE3} involves only nonsingular integrals. In particular, equation \eqref{BIE1} contains the second and third integrals, both of which are nonsingular.

\section{The numerical resolution of the BIE system}
In this section, we apply a numerical method to solve the system of singular boundary integral equations \eqref{BIE1}-\eqref{BIE3}. Our approach consists in constructing a scheme to discretize the singular integrals appearing in the system, thereby transforming it into a system of linear algebraic equations. Within this framework, the eigenvalues are approximated as those for which the resulting system admits a nontrivial solution. It is worth noting that, due to the geometry of the curves involved in the integrals, explicit expressions for the associated kernels can be derived in terms of elementary functions. Accordingly, we present tables containing the values of these kernels, whose construction is described below.

\subsection{Summary of the Numerical Approximation of the BIE System}

The resulting system of boundary integral equations (BIEs) involves integrals with singular kernels defined in terms of the fundamental solutions of the governing equation. Owing to the rectangular geometry of the domain and the particular structure of these functions, explicit expressions for the kernels can be obtained in terms of the Bessel functions of the second kind $N_0(x)$ and $N_1(x)$, which exhibit logarithmic singularities at the origin. This behavior requires special numerical treatment during the discretization process.

To solve the problem computationally, a uniform discretization of the boundaries and interfaces is performed by partitioning them into segments. Quadrature formulas adapted to weakly singular integrals are then applied, thereby transforming the continuous system into a linear algebraic system. The existence of nontrivial solutions for certain values of the spectral parameter $\lambda$ allows for the identification of the eigenvalues of the bidomain operator.

Since the kernels involve Bessel functions with singular behavior, their evaluation is carried out using truncated versions obtained from their series expansions. In particular, a fixed approximation is implemented by retaining the first five terms of the series. The implementation, developed in Julia, is available at \href{https://colab.research.google.com/drive/1ttURdbUb1CUx-bTgX8Mo7fYB0RarFR1e?usp=sharing}{Julia code}, and allows for adjusting the number of terms to improve accuracy.

The resulting kernel values, evaluated between pairs of discretized points, are stored in reusable tables, thereby facilitating the assembly of the system for different values of $\lambda$. The eigenvalues are determined by analyzing the loss of rank of the resulting system, either through the computation of the smallest singular value or by detecting zeros of the determinant of the discretized system.

\subsection{Discretization Scheme and Kernel Evaluation}

The discretization of the BIE system is carried out on the external boundaries and the interfaces between the different subregions. To this end, the curves $\Gamma_I$, $\Gamma_S$, $\Gamma_D$, $\Gamma_{IS}$, and $\Gamma_{SD}$ are uniformly partitioned into segments, generating nodes $\{ \mathbf{x}_j \}_{j=1}^{N}$ at which both the densities and the kernels are evaluated.

At points where the integration variable coincides with the evaluation point, specialized quadrature rules are employed to handle the weak logarithmic singularity. The kernels $p^{(*,\cdot)}$ and $u^{(*,\cdot)}$ are computed explicitly in terms of $N_0(x)$ and $N_1(x)$ for each pair $(\mathbf{z}, \mathbf{y})$.

In addition, an internal discretization of the domain $\Omega$ is performed to evaluate $u_{i,e}(\mathbf{x})$ in the interior, using the discretized integral representations. This allows for the reconstruction and visualization of the eigenmodes, as well as for the assessment of the accuracy of the solution through comparison with analytical solutions or more refined simulations.

The eigenvalues $\lambda$ are determined by examining when the linear system loses rank, which is identified either by computing the smallest singular value or by locating the zeros of the determinant of the system.

\section{Constants, Resulting System, and Numerical Validation}

During the formulation and implementation, the following constants are employed:

\begin{itemize}
    \item $a = 4.5 \times 10^{-15},\quad b = 2.5 \times 10^{-15}$: dimensions of the total rectangular domain $\Omega = (0, a) \times (0, b)$.
    
    \item $h = a \cdot 0.1 = 4.5 \times 10^{-16}$: semi-width of the septum defining the interfaces $\Gamma_{IS}$ and $\Gamma_{SD}$.
    
    \item $\sigma_{il} = 3.0,\quad \sigma_{it} = 1.0,\quad \sigma_{el} = 2.0,\quad \sigma_{et} = 1.65$: longitudinal and transverse intra and extracellular conductivities.
    
    \item $\epsilon_c = h \cdot 10^{-7} = 4.5 \times 10^{-23}$: regularization parameter for handling the kernel singularity.
    
    \item $\gamma \approx 0.5772$: Euler's constant in the analytical expressions of $N_0(x)$ and $N_1(x)$.
    
    \item $nnodes\_curve = 5$: number of points per discretization curve.
    
    \item $nnodes\_plot = 20$: number of points for result visualization.
\end{itemize}

The resulting linear system after discretization has the following characteristics:

\begin{itemize}
    \item It is large in size due to the separate treatment of each subdomain and its respective boundaries.
    
    \item It is structured in blocks, reflecting the interaction between regions and the conditions at interfaces.
    
    \item It depends on the spectral parameter $\lambda$, requiring exploration for different values.
    
    \item It is generally not symmetric due to the nature of the double layers and the coupling conditions.
\end{itemize}

\subsection{Numerical Example}

\begin{figure}[H]
    \centering
    \parbox{0.40\textwidth}{
        \centering
        \includegraphics[scale=0.25]{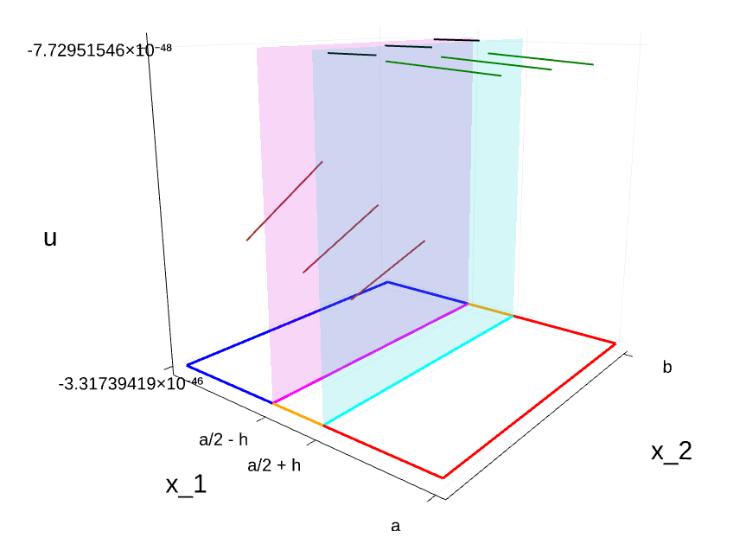}\\
    }
    \hfill
    \parbox{0.40\textwidth}{
        \centering
        \includegraphics[scale=0.33]{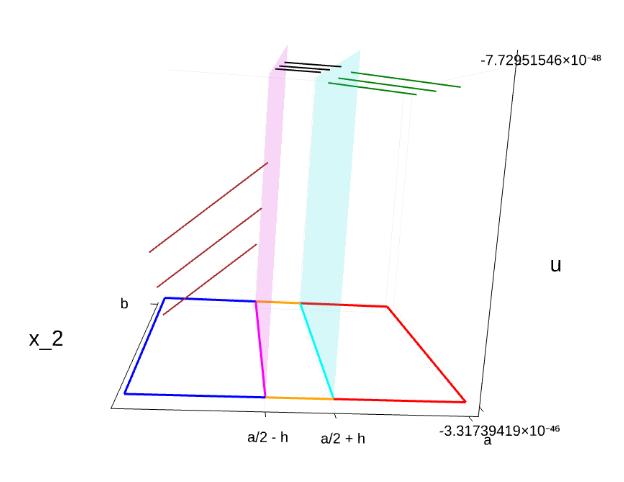}\\
    }
    \hfill
    \parbox{0.18\textwidth}{
        \centering
        \includegraphics[scale=0.20]{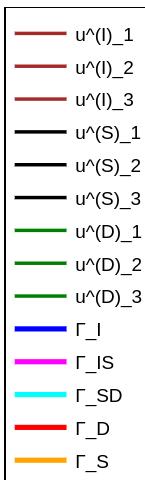}\\
    }
    \caption{Numerical representation of the solutions $u^{(I)}$, $u^{(S)}$, and $u^{(D)}$ within the region.}
    \label{fig:u_potentials}
\end{figure}

Figure~\ref{fig:u_potentials} illustrates the evolution of the electrical potential across the left ventricle, septum, and right ventricle, computed using the numerical method described in the previous sections, as the variable $\mathbf{z} = (x, y)$ moves along a straight line parallel to the $x$-axis. In this sense, the figure represents the corresponding regions in the $xy$-plane, while the value of the electrical potential is depicted along the direction orthogonal to this plane.

The numerically approximated values of the functions $u^{(I)}$, $u^{(S)}$, and $u^{(D)}$ obtained through the proposed integral scheme exhibit behavior consistent with the expected qualitative properties of the model. In particular, the solutions remain bounded and display a coherent structure across the different subregions.

The very small magnitude of the computed values, on the order of $10^{-46}$ to $10^{-47}$, is related to the scaling of the problem and the geometric configuration considered. In the absence of external stimulation or forcing terms, the model predicts low-amplitude potentials, which is consistent with the passive regime under consideration.

\section{Conclusions}

At this point, we focus on highlighting the qualitative behavior exhibited by the computed potentials. Note that, even though perfect contacto conditions for the current have been imposed at the interfaces between the subregions composing $\Omega$, the electrical potential exhibits a jump across the interface. This behavior may lead to a change in the propagation speed of the wavefront when the full diffusion problem is considered. 

From a theoretical perspective, in this work we have used potential theory to reduce an anisotropic elliptic boundary value problem, subject to boundary conditions and perfect transmission conditions, to a system of seven singular boundary integral equations. This reduction lowers the dimensionality of the problem and transforms a boundary value problem (PDE) into an integral formulation involving compact operators. 

% --- Bibliografía ---
\printbibliography

@Book{Ortner2015,
  author    = {Norbert Ortner and Peter Wagner},
  title     = {Fundamental Solutions of Linear Partial Differential Operators. Theory and Practice},
  year      = {2015},
  publisher = {Springer},
  address = {London},
}

@Book{Bell1968,
  author    = {William W. Bell},
  title     = {Special functions for and engineers},
  year      = {1968},
  publisher = {Van Nostrand},
  address   = {London},
}

@Book{Korenev2002,
  author    = {Boris G. Korenev},
  publisher = {Chapman \& Hall/CRC},
  title     = {Bessel functions and their applications},
  year      = {2002},
  address   = {Boca Raton},
}

@Book{Colton2013,
  author    = {David Colton and Rainer Kress},
  publisher = {SIAM},
  title     = {Integral equation methods in scattering theory},
  year      = {2013},
  address   = {Philadelphia},
}

@Book{Brebbia1984,
  author    = {C. A. Brebbia and J. C. F. Telles and L. C. Wrobel},
  publisher = {Springer},
  title     = {Boundary Element Techniques},
  year      = {1984},
  address = {Berlin},
}

@article{Tung1978,
    author = {L. Tung},
    title = {A bidomain model for describing ischemic myocardial D-C potentials.},
    journal = {Ph.D Ph.D. dissertation, Massachusetts Inst. Technol,},
    year = {1978}
}

@book{Sundnes2006,
    author = {J. Sundnes and G. T.Lines and X. Cai and B.F. Nielsen and K.A. Mardal and A. Tveito},
    title = {Computing the Electrical Activity in the Heart. Monographs in Computational Science and Engineering},
    publisher = {Springer, Berlin Heidelberg},
    year = {2006}
}

@inbook{Colli-Franzone2002,
    author = {P. Colli-Franzone and G. Savaré},
    title = {Evolution Equations, Semigroups and Functional Analysis: In Memory of Brunello Terreni},
    publisher = {Evolution Equations, Semigroups and Functional Analysis},
    year = {2002},
    chapter = {Degenerate Evolution Systems Modeling the Cardiac Electric Field at Micro- and Macroscopic Level}
}

@article{Veneroni2009,
    author = {M. Veneroni},
    title = {Reaction-diffusion systems for the macroscopic bidomain model of the cardiac electric field},
    journal = {Nonlinear Anal., Real World Appl.},
    year = {2009}
}

@article{Bourgault2009,
    author = {Y. Bourgault and Y. Coudière and C. Pierre},
    title = {Existence and uniqueness of the solution for the bidomain model used in cardiac electrophysiology.},
    journal = {Nonlinear Anal., Real World Appl.},
    year = {2009}
}

@article{Felipe-Sosa2022,
    author = {A. Fraguela and R. Felipe-Sosa and J. Henry and M. F. Márquez},
    title = {Existence of a T-Periodic solution for the monodomain model corresponding to an isolated ventricle due to ionic-diffusive relations},
    journal = {Acta Applicandae Mathematicae},
    year = {2022}
}

@article{Felipe-Sosa2023,
    author = {R. Felipe-Sosa and A. Fraguela-Collar and Y. H. García-Gómez},
    title = {On the strong convergence of the Faedo-Galerkin approximations to a strong t-periodic solution of the torso-coupled bidomain model},
    journal = {Math. Model. Nat. Phenom.},
    year = {2023}
}

@article{Boulakia2010,
    author = {M. Boulakia and S. Cazeau and M. A. Fernández and J. F. Gerbeau},
    title = {Mathematical modeling of electriocardiograms: A numerical study},
    journal = {Annals of Biomedical Engineering},
    year = {2010}
}

@article{Medrano2002,
    author = {G. A Medrano and A. de Micheli and A. Aranda, P. Iturralde, R. Chávez Domínguez},
    title = {An experimental contribution to the concept of “Jumping wave” phenomenon in the interventricular septum},
    journal = {Archives of Cardiology of Mexico},
    year = {2002}
}

@article{Medrano1957,
    author = {G. Medrano and A. Bisteni and R. W. Brancato and F. Pileggi and D. Sodi-Pallares},
    title = {The activation of the interventricular septum in the dog's heart under normal conditions and in bundle-branch block},
    journal = {Ann N Y Acad Sci},
    year = {1957}
}

@article{Colli-Franzone2006,
    author = {P. Colli Franzone and L.F. Pavarino and B. Taccardi} ,
    title = {Simulating patterns of excitation, repolarization and action potential duration with cardiac Bidomain and Monodomain models},
    journal = {Mathematical Biosciences},
    year = {2006}
}

@article{Patelli2017,
    author = {Alessandro S. Patelli and Luca Dedè and Toni Lassila and Andrea Bartezzaghi and Alfio Quarteroni},
    title = {Isogeometric approximation of cardiac electrophysiology models on surfaces: An accuracy study with application to the human left atrium},
    journal = {Computer Methods in Applied Mechanics and Engineering},
    year = {2017}
}

@article{Bucelli2021,
    author = {Michele Bucelli and Matteo Salvador and Luca Dede and Alfio Quarteroni},
    title = {Multipatch Isogeometric Analysis for electrophysiology: Simulation in a humanheart},
    journal = {Computer Methods in Applied Mechanics and Engineering},
    year = {2021}
}

@article{Charawi2017,
    author = {Lara Antonella Charawi},
    title = {Isogeometric overlapping Schwarz preconditioners for the Bidomain reaction–diffusion system},
    journal = {Computer Methods in Applied Mechanics and Engineering},
    year = {2017}
}

@book{Linkov2002,
    author = {A.M.Linkov},
    title = {Boundary Integral Equations in Elasticity Theory},
    publisher = {Springer},
    year = {2002}
}

@article{Kotadia2020,
    author = {Kotadia I and Whitaker J and Roney C and Niederer S and O'Neill M and Bishop M and Wright M.},
    title = {Anisotropic Cardiac Conduction},
    journal = {Arrhythm Electrophysiol Rev.},
    year = {2020}
}

\end{document}